\newtheorem{theorem}{Theorem}[section]
\newtheorem{proposition}[theorem]{Proposition}
\newtheorem{lemma}[theorem]{Lemma}
\newtheorem{corollary}[theorem]{Corollary}
\theoremstyle{definition}
\newtheorem{definition}[theorem]{Definition}
\newtheorem{example}[theorem]{Example}
\theoremstyle{remark}
\newtheorem{remark}[theorem]{Remark}
\numberwithin{equation}{section}
\theoremstyle{remark}
\DeclareMathOperator{\lv}{lv}
\DeclareMathOperator{\init}{init}
\DeclareMathOperator{\sepa}{sep}
\DeclareMathOperator{\Sol}{Sol}
\newcommand{\Pu}{\langle \langle x \rangle \rangle}
\newcommand{\C}{\mathbb{C}}
\newcommand{\N}{\mathbb{N}}
\newcommand{\V}{\mathbb{V}}
\newcommand{\Va}{\mathcal{V}}
\newcommand{\sys}{\mathcal{S}}
\newcommand{\qq}{\mathbf{p}_0}
\newcommand{\Res}{\mathrm{res}}
\newcommand{\field}{K}
\newcommand{\exfield}{L}
\newcommand{\acfield}{\overline{\exfield}}
\newcommand{\indets}{\mathrm{Coord}}
\newcommand{\leadvars}{\mathrm{LV}}
\begin{document}

\date{\today}
\title{Algebraic and Puiseux series solutions of systems of autonomous algebraic ODEs of dimension one in several variables}

\author{Jos\'{e} Cano}
\address{Dpto. Algebra, an\'alisis Matem\'atico, Geometr\'{\i}a y Topolog\'{\i}a, Universidad de Valladolid, Spain.}
\email{jcano@agt.uva.es}

\author{Sebastian Falkensteiner}
\address{Research Institute for Symbolic Computation (RISC), Johannes Kepler University Linz, Austria.}
\email{falkensteiner@risc.jku.at}

\author{Daniel Robertz}
\address{Lehrstuhl f\"ur Algebra und Zahlentheorie, RWTH Aachen University, Germany.
}
\email{daniel.robertz@rwth-aachen.de}

\author{J.Rafael Sendra}
\address{Universidad de Alcalá,
	Dpto. Física y Matemáticas,
	Alcalá de Henares,
	Madrid,
	Spain}
\email{rafael.sendra@uah.es}

\thanks{
	First author partially supported by PID2019-105621GB-I00 from the Spanish MICINN. Second and fourth authors partially supported by the grant PID2020-113192GB-I00 (Mathematical Visualization: Foundations, Algorithms and Applications) from the Spanish MICINN. Second author also supported by the Austrian Science Fund (FWF): P 31327-N32.
}

\begin{abstract}
In this paper we study systems of autonomous algebraic ODEs in several differential indeterminates. 
We develop a notion of algebraic dimension of such systems by considering them as algebraic systems. 
Afterwards we apply differential elimination and analyze the behavior of the dimension in the resulting Thomas decomposition. 
For such systems of algebraic dimension one, we show that all formal Puiseux series solutions can be approximated up to an arbitrary order by convergent solutions.
We show that the existence of Puiseux series and algebraic solutions can be decided algorithmically.
Moreover, we present a symbolic algorithm to compute all algebraic solutions. 
The output can either be represented by triangular systems or by their minimal polynomials.
\end{abstract}

\maketitle

\keywords{\textbf{keywords.}
Algebraic autonomous ordinary differential equation,
Puiseux series solution,
convergent solution, 
Artin approximation,
algebraic solution,
Thomas decomposition.
}

\section{Introduction}
In~\cite{cano2019existence}, we have studied local solutions of first order autonomous algebraic ordinary differential equations (shortly AODEs).
Therein we prove that every fractional power series solution of such equations is convergent, and an algorithm for computing these solutions is provided.
In~\cite{cano2020algebraic}, we generalized these results to the case of systems of higher order autonomous AODEs in one unknown function whose associated algebraic set is of dimension one, i.e.\ the algebraic set is a finite union of curves and, maybe, points.
Here, in the current paper, we show that every component of a fractional power series solution vector of a system of higher order autonomous AODEs in several unknown functions, with the expected dimension, is convergent.
In~\cite{Denef1984} it is shown that for general systems of AODEs the existence of non-constant formal power series solutions can not be decided algorithmically.
Nevertheless, for the systems treated in the paper and formal Puiseux series solutions, this undecidability property does not hold. 
Moreover, all algebraic solutions can be computed algorithmically.

We follow the idea in~\cite{cano2020algebraic} and triangularize the given system and perform differential elimination. 
In contrast to~\cite{cano2020algebraic}, we use in the current paper the so-called Thomas decomposition (see e.g.~\cite{robertz2014formal,bachler2012algorithmic}). 
In the literature there are several other methods to triangularize differential systems and to obtain resolvent representations of them, see for instance~\cite{CluzeauHubert2003,boulier1995representation} and references therein, but the Thomas decomposition seems to be the most adequate and direct method for our reasonings.

In the Thomas decomposition of the systems under investigation, we obtain subsystems involving the unknown functions and only one derivative of one of the functions. 
In the case where the first solution component of such a subsystem is an algebraic Puiseux series, the subsystem can be further simplified in order to obtain subsystems involving only the unknown functions. 
As a consequence, all solution components are algebraic and for every component of a possible solution its minimal polynomials can be found. 
In the representation in terms of the minimal polynomials, however, not all possible root combinations indeed define a solution of the given differential system. 
So we alternatively represent the solutions as the triangular set involving no derivatives. 

The main algorithm presented here generalizes both the algorithms in~\cite{sendra2015rational} and~\cite{cano2020algebraic}, in the following sense. 
In~\cite{sendra2015rational}, rational solutions of systems of autonomous AODEs of dimension one with one differential unknown are computed. 
Additionally, algebraic and formal Puiseux series solutions for such systems were considered in~\cite{cano2020algebraic}. 
We present here an extension to systems of this kind involving several differential unknowns.

The structure of the paper is as follows.
In Section~\ref{sec-pre} we recall some necessary concepts such as simple systems, the Thomas decomposition and algebraic solutions of first order autonomous AODEs. 
In Section~\ref{sec-algebraicGeometricDimension} we introduce, and briefly discuss, a notion of dimension when dealing with differential systems. 
Following our algebraic definition, which can be computed by just using well-known methods from computational Algebraic Geometry, the dimension does not increase when applying algebraic and differential elimination methods for obtaining a Thomas decomposition (see Theorem~\ref{thm:ThomasDimension}). 
Consequently, differential systems of algebraic dimension one decompose into simple subsystems of a very specific type, namely into triangular systems involving no derivatives of the unknown functions except for possibly the first derivative of one variable.
In Section~\ref{sec-systems} we study formal Puiseux series solutions of such systems with constant coefficients. 
The main result is Theorem~\ref{theorem-convergence2}, where it is shown that the Artin approximation theorem for algebraic systems (see~\cite{artin1968solutions}) holds for autonomous differential systems of algebraic dimension one. 
Additionally we show in Theorem~\ref{thm:existence} that the existence of constant and non-constant formal Puiseux series solutions can be decided.
In Section~\ref{sec-systemalgebraic} we focus on algebraic solutions.
In Theorem~\ref{thm:algSolDecomposition} is shown that all algebraic solutions of systems of autonomous AODEs of algebraic dimension one can be given by a finite number of simple systems involving no derivatives. 
Alternatively to the representation by simple systems, the algebraic solutions can be represented by their minimal polynomials and Proposition~\ref{prop:minpolynomialsystem} gives a degree bound on them. 
In Section~\ref{sec-Algorithms}, by using the algorithms in~\cite{cano2020algebraic,bachler2012algorithmic}, we derive an algorithm for determining all algebraic solutions of the original system. 
This is also illustrated by examples.

\section{Preliminaries}\label{sec-pre}
In this section we first recall the notion of simple systems and Thomas decomposition
in a form that is adapted to systems of ordinary differential equations.
For further details we refer to~\cite{robertz2014formal}.
Afterwards we recall some results on algebraic solutions of first order autonomous AODEs.
We follow the work~\cite{aroca2005algebraic}, where the authors give a necessary and sufficient condition on algebraic general solutions of such equations. 
They indicate how to use these results in order to compute all algebraic solutions. 
We need some results that are stated in~\cite{aroca2005algebraic} without an explicit proof.
For the sake of completeness, we present detailed proofs of them.

\subsection{Algebraic and Differential Thomas Decomposition}\label{subsec:Thomas}

Let $\field$ be a field of characteristic zero
and $\field[z_0,\ldots,z_m]$ the polynomial ring in $m+1$ indeterminates with ordering $z_0<\cdots<z_m$.
For $F \in \field[z_0,\ldots,z_m] \setminus \field$ we denote by $\lv(F)$ the \textit{leading variable}
of $F$ with respect to $<$.
An algebraic system is given by
\begin{equation}\label{eq-simplealgebraic}
\sys = \{F_1=0,\ldots,F_M=0, U_1 \ne 0,\ldots,U_N\ne 0 \},
\end{equation}
where $F_i$, $U_j \in \field[z_0, \ldots, z_m]$ are polynomials. 
Moreover, let $\exfield \supseteq \field$ be a field extension and let us denote by $\acfield$ the algebraic closure of $\exfield$.
For given $S \subset \field[z_0, \ldots, z_m]$, we introduce the set
$$\Va_{\acfield}(\sys) := \V_{\acfield}(\{ F_1,\ldots,F_M \}) \setminus \bigcup_{1 \le i \le N} \V_{\acfield}(\{ U_i \}) \subseteq {\acfield}^{m+1},$$
where $\V_{\acfield}(\Delta)$ is the zero set defined by the polynomials in $\Delta$. 
Each non-constant polynomial is considered recursively as a univariate polynomial
in its leading variable with coefficients
that are univariate polynomials in lower ranked indeterminates, etc. Correspondingly, a sequence
of projections is defined by
$$\begin{array}{ccccccccc}
\acfield^{m+1} \! & \! \stackrel{\pi_m}{\longrightarrow} \! & \!
\acfield^m \! & \! \stackrel{\pi_{m-1}}{\longrightarrow} \! & \!
\acfield^{m-1} \! & \! \stackrel{\pi_{m-2}}{\longrightarrow} \! & \! \ldots \! & \!
\stackrel{\pi_1}{\longrightarrow} \! & \! \acfield\\[0.2em]
(z_0, \ldots, z_m) \! & \! \longmapsto \! & \!
(z_0, \ldots, z_{m-1}) \! & \! \longmapsto \! & \!
(z_0, \ldots, z_{m-2}) \! & \! \longmapsto \! & \! \ldots \! & \!
\longmapsto \! & \! z_0
\end{array}$$
The \textit{initial} $\init(F)$ of a non-constant polynomial $F$ is the coefficient of the highest power of $\lv(F)$ in $F$,
which is a polynomial in indeterminates that are ranked lower than $\lv(F)$.
Similarly, the \textit{discriminant} of $F$ is also defined with respect to $\lv(F)$.

\begin{definition}\label{de:algsimple}
	The algebraic system~$\sys$ is said to be \textit{simple} (with respect to $<$) if
	\begin{enumerate}
		\item the polynomials $F_1,\ldots,F_M, U_1,\ldots,U_N$ are not in $\field$ and have
		pairwise distinct leading variables;
		\item the initial and discriminant of each of these polynomials (say, with leading variable $z_{j+1}$)
		do not admit roots $(a_0, \ldots, a_j) \in \acfield^{j+1}$ in
		$(\pi_{j+1} \circ \ldots \circ \pi_m)(\Va_{\acfield}(\sys))$.
	\end{enumerate} 
\end{definition}

An \textit{algebraic Thomas decomposition} of a system $\sys$ as in~\eqref{eq-simplealgebraic} is
a finite collection of (algebraic) simple systems $\sys_i$ such that
\begin{align*}
\Va_{\acfield}(\sys) = \dot\bigcup \, \Va_{\acfield}(\sys_i),
\end{align*}
where $\dot\cup$ denotes the disjoint union, see~\cite[Definition~2.2.10]{robertz2014formal}.

Now let $\field$ be a differential field of characteristic zero, 
endowed with the derivation $'={\rm d}/{\rm d}x$, and consider
the differential polynomial ring $\field\{ y_1, \ldots, y_n \}$ in
differential indeterminates $y_1$, \ldots, $y_n$ representing unknown functions of $x$.
In other words, we consider the polynomial ring
over $\field$ in $y_1$, \ldots, $y_n$ and all their derivatives $y'_1$, $y''_1$, \ldots,
$y'_n$, $y''_n$, \ldots, which are linearly ordered by the \textit{ranking}
\begin{equation}\label{eq:ranking}
y_1 < y'_1 < y''_1 < \ldots < y_2 < y'_2 < y''_2 < \ldots < y_n < y'_n < y''_n < \ldots
\end{equation}
(more generally, any ranking on the differential polynomial ring $\field\{ y_1, \ldots, y_n \}$ is assumed to satisfy $y^{(l)}_j < y^{(k)}_j$
whenever $k > l$.)
Then for any differential polynomial $F \in \field\{ y_1, \ldots, y_n \} \setminus \field$
its leading variable $\lv(F)$ (with respect to $<$) and its initial $\init(F)$
are defined as above. The \textit{separant} $\sepa(F)$ of $F$ is defined as the
partial derivative of $F$ with respect to $\lv(F)$, which is also the initial of any
proper derivative of $F$.

Let $F$, $G \in \field\{ y_1, \ldots, y_n \} \setminus \field$. Then $F$ can be reduced modulo $G$
if the leading variables of $F$ and $G$ involve the same differential indeterminate,
say, $\lv(F) = y^{(k)}_j$ and $\lv(G) = y^{(l)}_j$, and if $k > l$ or
$k = l$ and the degree $d_F=\deg(F,y^{(k)}_j)$ of $F$ in $y^{(k)}_j$ is greater than or equal to the
degree $d_G=\deg(G,y^{(k)}_j)$. In this case a (differential) pseudo-reduction
of $F$ modulo $G$ is possible as follows.
\begin{itemize}
	\item If $k = l$, then let
	\[
	R = \init(G) \, F - \init(F) \, (y^{(k)}_j)^{d_F - d_G} \, G.
	\]
	\item If $k > l$, then let
	\[
	R = \sepa(G) \, F - \init(F) \, (y^{(k)}_j)^{d_F - 1} G^{(k-l)}.
	\]
\end{itemize}
Then $R$ is either constant or has leading variable $\lv(R) = \lv(F)$ and degree in $\lv(F)$ less than $d_F$
or has leading variable $\lv(R)$ ranked lower than $\lv(F)$.
If none of the above reduction steps can be performed, then $F$
is said to be \textit{(differentially) reduced modulo $G$}.

Let $G_1$, \ldots, $G_r \in \field\{ y_1, \ldots, y_n \} \setminus \field$ (typically with
pairwise distinct leading variables). Iterated pseudo-reduction of $F$
modulo each of the polynomials $G_1$, \ldots, $G_r$ yields, after finitely many steps, a
differential polynomial $R$, called
\textit{(differential) pseudo-remainder} of $F$ modulo $G_1$, \ldots, $G_r$,
that is reduced modulo $G_1$, \ldots, $G_r$.
Taking the recursive representation
of (differential) polynomials into account, pseudo-reductions similar to the ones defined above
may be applied to ensure that all coefficients of $R$ and their coefficients, etc.,
are (differentially) reduced modulo $G_1$, \ldots, $G_r$ as well. If we would like to emphasize
this property, we call $R$ \textit{completely reduced modulo} $G_1$, \ldots, $G_r$.

A \textit{differential system} is given by
\begin{equation}\label{eq-simple}
\sys = \{F_1=0,\ldots,F_M=0, U_1 \ne 0,\ldots,U_N \ne 0\},
\end{equation}
where $F_1$, \ldots, $F_M$, $U_1$, \ldots, $U_N \in \field\{ y_1, \ldots, y_n \}$ are differential polynomials.

\begin{definition}\label{de:diffsimple}
The differential system $\sys$ is said to be \textit{simple} (with respect to $<$) if
\begin{enumerate}
	\item \label{diffsimple:1} $\sys$ is simple as an algebraic system;
	\item \label{diffsimple:2} each $F_i$ is reduced modulo $F_1$, \ldots, $F_{i-1}$, $F_{i+1}$, \ldots, $F_M$;
	\item \label{diffsimple:3} each $U_j$ is reduced modulo $F_1$, \ldots, $F_M$.
\end{enumerate}
If $F_i$ and $U_j$ in~\eqref{diffsimple:2} and~\eqref{diffsimple:3}, respectively, are completely reduced
modulo $F_1$, \ldots, $F_{i-1}$, $F_{i+1}$, \ldots, $F_M$ and modulo $F_1$, \ldots, $F_M$, respectively,
then we call $\sys$ a \textit{simple differential system with completely reduced equations and inequations}.
\end{definition}

Let $\exfield$ be a differential extension field of $\field$.
The solutions in $\exfield$ of the system of differential equations $\{F_1=0, \ldots, F_M=0\}$, fulfilling the inequations in $\sys$, will be denoted by $\Sol_{\exfield}(\sys)$.

\medskip

A \textit{(differential) Thomas decomposition}~\cite[Algorithm~2.2.56]{robertz2014formal}
of a system~\eqref{eq-simple} is a finite collection of simple (differential) systems $\sys_i$ such that
\begin{equation}\label{eq:partitionsolutions}
\Sol_{\exfield}(\sys) = \dot\bigcup \, \Sol_{\exfield}(\sys_i).
\end{equation}
Simple (differential) systems are formally integrable in the sense that they incorporate
all integrability conditions for formal power series solutions.

The computation of a Thomas decomposition of a differential system can be understood as an iteration of two phases (which can also be interwoven as described in~\cite{bachler2012algorithmic}). 
The first phase achieves simplicity in the algebraic sense by applying Euclidean division with case distinctions so as to ensure the triangular shape and that polynomials have non-vanishing initials and discriminants. 
Splittings of the systems arise from these case distinctions. 
In the second phase, differential pseudo-reductions are applied in order to ensure the remaining conditions in Definition~\ref{de:diffsimple}. 
When non-zero pseudo-remainders are obtained, this process has to be restarted.

In the proof of Theorem~\ref{thm:ThomasDimension}, we will use this approach. 
More details can be found in~\cite{robertz2014formal}.

Implementations of the algebraic and differential Thomas decomposition methods have been
developed by T.~B\"achler and M.~Lange-He\-ger\-mann. The implementation for differential systems
has been incorporated into Maple's standard library since Maple 2018 and was also published
in the Computer Physics Communications library \cite{GERDT2019202}.

\subsection{Algebraic Solutions of First Order Autonomous AODEs}\label{sec-algebraic}

Let $\field$ be an algebraically closed field of characteristic zero. 
Let $\field\Pu$ be the field of formal Puiseux series expanded around any $x_0 \in \field$ or around infinity. 
Since the equations and systems of consideration are invariant under translation of the independent variable, we can assume without loss of generality that the formal Puiseux series are expanded around zero or around infinity such as in~\cite{cano2020algebraic}.

In this subsection we consider a subclass of formal Puiseux series, namely algebraic series.
These are $y(x) \in \field\Pu$ such that there exists a non-zero $Q \in \field[x,y]$ with $Q(x,y(x))=0$.
Note that since the field of formal Puiseux series is algebraically closed, all algebraic solutions can be represented as (formal) Puiseux series. 
In~\cite{aroca2005algebraic} it is stated, without an explicit proof, that if there exists one (non-constant) algebraic solution of a first order autonomous AODE, then all of them are algebraic.
For the convenience of the reader we provide a complete proof of this fact (see Theorem~\ref{THM:AllAlgebraic}) and additionally show that the minimal polynomials are equal up to a shift and the multiplication with a constant (Theorem~\ref{THM:AllMinimalPolynomial}).

\vspace*{2mm}

For $d_x,d_y \in \N$, we say that a formal Puiseux series $y(x) \in \field\Pu$ is \textit{$(d_x,d_y)$-algebraic} if and only if $y(x)$ is algebraic over $\field(x)$ with a minimal polynomial $Q(x,y) \in \field[x,y]$ such that $$\deg(Q,x) \leq d_x,~\deg(Q,y) \leq d_y.$$
Rational functions $y(x)=\frac{f(x)}{g(x)} \in \field(x) \setminus \{0\}$ are $(d_x,1)$-algebraic, where $\gcd(f,g)=1$ and $d_x$ is the maximum of the degrees of $f(x)$ and $g(x)$, since they have the minimal polynomial $$Q(x,y)=g(x)\,y-f(x).$$
Lemma 2.5 in~\cite{aroca2005algebraic} can be rewritten as follows.

\begin{lemma}\label{LEM:DxDyAlgebraic}
Let $d_x,d_y>0$. 
Then there exists an autonomous differential polynomial $G \in \field\{y\}$ of order less or equal to $(d_x+1)(d_y+1)-1$ such that for any $y(x) \in \field\Pu$, $y(x)$ is $(d_x,d_y)$-algebraic over $\field(x)$ if and only if $y(x) \in \Sol_{\field\Pu}(G)$.
\end{lemma}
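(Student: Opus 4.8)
The plan is to recast "$(d_x,d_y)$-algebraic'' as a linear-dependence condition and then encode that condition in a single Wronskian. First I would note that a series $y(x)\in\field\Pu$ is $(d_x,d_y)$-algebraic exactly when the $N:=(d_x+1)(d_y+1)$ monomials $x^iy(x)^j$ with $0\le i\le d_x$, $0\le j\le d_y$ are linearly dependent over $\field$: a nontrivial relation $\sum_{i,j}c_{ij}x^iy^j=0$ is the same as a nonzero polynomial $Q=\sum c_{ij}x^iy^j$ with $\deg(Q,x)\le d_x$ and $\deg(Q,y)\le d_y$ annihilating $y(x)$, and passing from $Q$ to the minimal polynomial only lowers the two degrees (by Gauss' lemma). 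Since $\field$ is the field of constants of the differential field $\field\Pu$, the classical Wronskian criterion holds in both directions: these $N$ elements are $\field$-linearly dependent if and only if their Wronskian $W$, i.e.\ the determinant of the matrix of their derivatives of orders $0,\ldots,N-1$, vanishes. I would therefore take $G$ to be this Wronskian, regarded as a differential polynomial in $y$. By construction it involves $y,y',\ldots,y^{(N-1)}$ only, so its order is at most $N-1=(d_x+1)(d_y+1)-1$, and the stated equivalence is immediate from the two directions of the criterion (for the reverse one also needs that $W$ is not the zero polynomial, which holds because a generic jet makes the $x^iy^j$ independent).

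The main obstacle, and the only nonformal point, is to verify that $G$ is \emph{autonomous}, i.e.\ that it lies in $\field\{y\}$ and carries no explicit $x$, despite being assembled from the $x$-dependent functions $x^iy^j$. I would settle this through the translation behaviour of the Wronskian. For any constant $c$ the substitution $x\mapsto x+c$ sends the generating family $\{x^iy^j\}$ to $\{(x+c)^iy(x+c)^j\}$; for each fixed $j$ the polynomials $(x+c)^i$, $0\le i\le d_x$, arise from the $x^i$ by a unipotent (binomial) change of basis of determinant $1$, so the two families span the same $\field$-space, and since the Wronskian is alternating and multilinear their Wronskians coincide. Combining this with the standard identity that the Wronskian of translated functions equals the translate of the Wronskian, the evaluation of $W$ on the jet $\mathbf Y=(y,y',\ldots,y^{(N-1)})$ satisfies $W(x+c,\mathbf Y)=W(x,\mathbf Y)$. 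Because the evaluation point and the explicit $x$-slot can now be decoupled—every vector of $\field^{N}$ occurs as a jet $(y^{(k)}(u))_k$ for a suitable polynomial $y$, while $x$ remains free—the polynomial $W$ cannot depend on its explicit $x$-argument, whence $G\in\field\{y\}$.

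To close, I would assemble the two implications and check the order bound is met exactly, and I would record the smallest case as a sanity check: for $d_x=d_y=1$ the four monomials are $1,x,y,xy$, the determinant collapses to $G=3(y'')^2-2\,y'y'''$, the numerator of the Schwarzian derivative, whose vanishing is classically equivalent to $y$ being a Möbius function of $x$, precisely the $(1,1)$-algebraic series. The only remaining bookkeeping is the case of expansions around infinity, which reduces to the case around a finite point by the translation invariance already exploited, and the verification that the field of constants of $\field\Pu$ is indeed $\field$, so that the Wronskian criterion is applicable in the form used above.
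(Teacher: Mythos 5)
Your proof is correct and coincides with the argument behind the paper's own treatment: the paper does not prove this lemma but imports it as Lemma 2.5 of \cite{aroca2005algebraic}, whose proof is exactly your construction — the Wronskian of the $(d_x+1)(d_y+1)$ monomials $x^iy^j$, the dependence-over-constants criterion applied in the differential field $\field\Pu$ with field of constants $\field$, Gauss' lemma to bound the bidegree of the minimal polynomial, and the translation/unimodular-change-of-basis argument (a valid polynomial identity, since jets at a point are arbitrary and $\field$ is infinite) to see that $G$ is autonomous. One cosmetic remark: for expansions around infinity no reduction to a finite point is needed (nor would translation achieve it) — the Wronskian criterion applies verbatim in the differential field of Puiseux series at infinity, which contains $x$ and still has $\field$ as its field of constants.
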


\begin{theorem}\label{THM:AllAlgebraic}
Let $F \in \field[y,y']$ be an irreducible polynomial. 
Let $y(x) \in \field\Pu$ be a non-constant algebraic solution of the differential equation $F(y,y')=0$.
Then all non-constant formal Puiseux series solutions of $F(y,y')=0$ are algebraic over $\field(x)$.
\end{theorem}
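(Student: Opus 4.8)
The plan is to exploit that the equation is autonomous and of first order, so that its non-constant solutions are all governed by a single differential on the plane curve defined by $F$, and that algebraicity of a solution is a property of this differential and this curve, not of the particular solution chosen. First I would pass to the irreducible affine curve $\cc$ given by $F(y,p)=0$ in the coordinates $(y,p)$, where $p$ plays the role of $y'$, together with its smooth projective model $\tilde\cc$ and function field $\field(\cc)$. Since $F$ is irreducible and admits a non-constant solution, neither $y$ nor $p$ can be constant on $\cc$ (otherwise $F$ would force only constant solutions), so $\omega := dy/p$ is a well-defined nonzero rational differential on $\tilde\cc$. The decisive observation is that every non-constant Puiseux series solution amounts to a local integration of this one fixed $\omega$: if $\tilde y(x)$ is such a solution, then $(\tilde y(x),\tilde y'(x))$ is a non-constant formal arc on $\cc$ centered at some place $P$ of $\tilde\cc$, and along it $dx = d\tilde y/\tilde y' = \omega$, while conversely the solution is recovered by expanding the coordinate $y$ as a function of $x=\int\omega$ near $P$. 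Hence $\tilde y$ is algebraic over $\field(x)$ exactly when this local antiderivative of $\omega$ is an algebraic function of the point of $\cc$.

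Next I would convert the hypothesis into a global statement about $\omega$. From the given algebraic solution $y(x)$ I form the curve $C$ with $\field(C)=\field(x,y(x))$; on $C$ the quantity $p=y'=dy/dx$ is again a rational function (a ratio of two rational differentials on a curve), the identity $F(y,p)=0$ holds in $\field(C)$, and therefore there is a dominant morphism $\varphi\colon C\to\tilde\cc$. By construction $\varphi^*\omega = dy/p = dx$ with $x\in\field(C)$, so $\omega$ acquires a \emph{global algebraic antiderivative}, namely $x$, after the finite pullback $\varphi$. This is the precise sense in which the single piece of data ``one solution is algebraic'' is recorded intrinsically in $\omega$ and $\cc$; in particular $\varphi^*\omega$ is exact, hence residue-free, which is what rules out the logarithmic and abelian integrals that would otherwise make solutions transcendental.

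Finally, for an arbitrary non-constant solution $\tilde y$ with independent variable $x$ and center $P$ on $\tilde\cc$, I would lift $P$ to a place $P'$ of $C$ lying above it. Near $P'$ both the rational function $x\in\field(C)$ and the second solution's independent variable satisfy $d(\cdot)=\varphi^*\omega$, so they differ by a constant $c\in\field$ (the field of constants of $\field(C)$ is $\field$, as $\field$ is algebraically closed). Setting $w:=x-c\in\field(C)$, the second variable coincides with $w$ near $P'$; since $w$ is non-constant on $C$ (because $dx=\varphi^*\omega\neq 0$), the rational function $y\in\field(C)$ is algebraic over $\field(w)$, and expanding this algebraic relation near $P'$ exhibits $\tilde y$ as algebraic over $\field(x)$, which is the assertion.

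The step I expect to be the main obstacle is making the correspondence between non-constant solutions and places of $\tilde\cc$ fully rigorous in the formal Puiseux setting: verifying that a non-constant solution genuinely centers at a place, that the local relation $dx=\omega$ pulls back correctly under $\varphi$, and that the constant relating the two independent variables is an honest element of $\field$. Once these are in place, everything reduces to standard facts about rational differentials and function fields of curves, the conceptual content lying entirely in the statement that whether $\int\omega$ is an algebraic function depends only on $\omega$ and $\cc$. I would also note, as a sanity check consistent with this argument, that autonomy directly furnishes the translates $y(x+c)$ as an a~priori one-parameter family of algebraic solutions; the argument above is precisely what shows this family, up to algebraicity, exhausts all non-constant solutions.
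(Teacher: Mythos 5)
Your proposal is correct in substance, but it follows a genuinely different route from the paper. The paper's proof is purely differential-algebraic: it invokes Lemma~\ref{LEM:DxDyAlgebraic} to obtain a single autonomous differential polynomial $G$ whose formal Puiseux series solutions are exactly the $(d_x,d_y)$-algebraic series, computes the differential pseudo-remainder $R$ of $G$ modulo $F$, shows $R=0$ by a resultant argument together with the irreducibility of $F$, and then observes that on any non-constant solution of $F=0$ the initial and separant of $F$ cannot vanish, so every such solution also solves $G=0$ and is therefore algebraic. You instead work geometrically on the curve $\cc: F(y,p)=0$ with the rational differential $\omega = dy/p$: the given algebraic solution, via its minimal polynomial $Q$, yields a finite cover $\varphi\colon C \to \tilde{\cc}$ with $\varphi^{*}\omega = dx$ exact, and comparing local primitives of $\omega$ at the place determined by an arbitrary non-constant solution shows that solution satisfies $Q(x+c,y)=0$ for some $c \in \field$. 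Your route buys more than the statement at hand: it delivers in one stroke the stronger Theorem~\ref{THM:AllMinimalPolynomial}, which the paper proves separately (Lemma~\ref{lemma:AllMinimalPolynomial}, via regular curve points, Newton-polygon expansions and Cauchy--Kovalevskaya); the paper's route, in exchange, comes with the explicit degree data of Lemma~\ref{LEM:DxDyAlgebraic} that feeds the algorithmic sections (cf.\ Proposition~\ref{prop:minpolynomialsystem}). The technical points you flag are real but standard and fillable: note that $p$ may be a nonzero \emph{constant} on $\cc$ (e.g.\ $F=y'-a$), which is harmless since all one needs is $dy \neq 0$ and $p \neq 0$ in $\field(\cc)$; that exactness of $\varphi^{*}\omega$ forces all residues of $\omega$ itself to vanish (residues multiply by ramification indices in characteristic zero), so $\omega$ admits a Laurent-series primitive $x_P(t)$ at the relevant place $P$; and that the comparison of the two independent variables is cleanest done at $P$ rather than at the lift $P'$ --- pulling the identity $Q(x,y)=0$ on $C$ back through $P'$ gives $Q(x_P(t)-c_2,\,y_P(t))=0$ in $\field((t))$, while the arc of the second solution gives $x_P(t(\tau))=x+c_1$, and composing the two yields $Q(x+c,\tilde y(x))=0$ with $c=c_1-c_2 \in \field$, since constants of formal Laurent series over $\field$ lie in $\field$.
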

\begin{proof}
Let $Q(x,y) \in \field[x,y]$ be the minimal polynomial of $y(x)$
and let $G \in \field[y,\ldots,y^{(d)}]$, where $d=(\deg(Q,x)+1)(\deg(Q,y)+1)-1$, be as in Lemma~\ref{LEM:DxDyAlgebraic}. 
Then we can compute the differential pseudo-remainder of $G$ with respect to $F$
\begin{align} \label{eq-help3}
R(y,y')=I(y)^{k_1}\,S(y,y')^{k_2}\,G-\sum_{0 \leq i \leq d-1} G_i\,F^{(i)},
\end{align}
where $I=\init(F) \in \field[y], S=\sepa(F) \in \field[y,y'], G_i \in \field\{y\}, k_1,k_2 \in \N$ and $\deg(R,y')<\deg(F,y')$. 
By plugging $y(x)$ into equation~\eqref{eq-help3}, we obtain that $R(y(x),y'(x))=0.$
Let us show that $R$ is the zero-polynomial. 
Assume that $R$ has positive degree in $y'$, otherwise $R$ depends only on $y$ and the statement follows. 
Consider the resultant
\begin{align*}
H(y)=\Res(F,R,y')=A_1(y,y')\,F(y,y')+A_2(y,y')\,R(y,y')
\end{align*}
for some $A_1,A_2 \in \field[y,y']$ with $\deg(A_1,y')<\deg(R,y'), \deg(A_2,y')<\deg(F,y').$
Since $H(y(x))=0$ and $y(x)$ is non-constant, $H$ is the zero polynomial. 
Because $F$ is an irreducible polynomial, $F$ divides $A_2$ or $R$, which is a contradiction to the degree conditions. 
Hence, $R(y,y')=0$. 
Now let $\bar y(x)$ be a non-constant Puiseux series solution of the differential equation $F(y,y')=0$.
Substituting $\bar y(x)$ into equation~\eqref{eq-help3}, $$I(\bar y(x))^{k_1}\,S(\bar y(x),\bar y'(x))^{k_2}\,G(\bar y(x),\ldots,\bar y^{(d)}(x))=0.$$
Similar as before, since $\bar y(x)$ is non-constant and $F$ is irreducible, it follows that the first two factors are non-zero and $\bar y(x)$ is a solution of $G=0$.
Then the statement follows from Lemma~\ref{LEM:DxDyAlgebraic}.
\end{proof}

\begin{lemma}\label{lemma:AllMinimalPolynomial}
Let $F \in \field[y,y']$ be an irreducible polynomial. 
Let $y_1(x),y_2(x)$ be non-constant algebraic solutions of the differential equation $F(y,y')=0$ and let $Q_1,Q_2 \in \field[x,y]$ be their minimal polynomials.
Then there is $c \in \field$ such that $$Q_1(x,y)= Q_2(x+c,y).$$
\end{lemma}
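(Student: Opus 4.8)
The plan is to exploit two features of the situation. First, the equation $F(y,y')=0$ is autonomous, so every translate $y_i(x+c)$ of a solution is again a solution, and the minimal polynomial of $y_i(x+c)$ is the translate $Q_i(x+c,y)$ (since $x\mapsto x+c$ is a $\field$-automorphism of $\field(x)$ fixing $\field$ and preserving irreducibility and degrees). Second, the equation separates variables. With the first feature in hand, I would reduce the whole statement to the following claim: there is a constant $c\in\field$ such that $y_2(x+c)$ is a root of $Q_1$, that is, a conjugate of $y_1$ over $\field(x)$. Indeed, if $y_2(x+c)$ is such a conjugate, then $Q_2(x+c,y)$ is its minimal polynomial, and, being irreducible and sharing the root $y_2(x+c)$ with the irreducible polynomial $Q_1$, it must agree with $Q_1$ up to a factor in $\field(x)$; normalizing the minimal polynomials suitably then forces $Q_1(x,y)=Q_2(x+c,y)$, which is the assertion.

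To establish the claim I would separate variables. Since $y_1$ is non-constant, $y_1'\ne 0$, so $y_1$ admits a (local) compositional inverse $X_1$, characterized by $X_1(y_1(x))=x$ and hence $X_1'(u)=1/y_1'(X_1(u))$; set $w_1(u):=y_1'(X_1(u))=1/X_1'(u)$. Substituting $x=X_1(u)$ into the identity $F(y_1(x),y_1'(x))=0$ yields $F(u,w_1(u))=0$, so $w_1$ is a root of $F(u,\cdot)$, which is irreducible over $\field(u)$ because $F$ is irreducible in $\field[y,y']$ and of positive degree in $y'$ (otherwise all solutions would be constant). The same construction applied to $y_2$ produces $X_2$ and a second root $w_2$ of $F(u,\cdot)$ with $X_2'(u)=1/w_2(u)$. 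As $w_1$ and $w_2$ are conjugate over $\field(u)$, there is an isomorphism $\sigma$ over $\field(u)$ with $\sigma(w_1)=w_2$; since the derivation $d/du$ extends uniquely to the algebraic closure of $\field(u)$ in characteristic zero, $\sigma$ commutes with it, whence $\sigma(X_1)'=\sigma(1/w_1)=1/w_2=X_2'$. Therefore $(X_2-\sigma(X_1))'=0$, and because $\field$ is algebraically closed its field of constants is $\field$ itself, so $X_2=\sigma(X_1)+c$ for some $c\in\field$.

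It then remains to invert this relation. The branch $\sigma(X_1)$ is a conjugate of $X_1$ over $\field(u)$, i.e.\ another branch of the algebraic relation $Q_1(x,u)=0$ read as defining $x$ over $\field(u)$, and its compositional inverse is correspondingly a conjugate of $y_1$ over $\field(x)$. From $X_2=\sigma(X_1)+c$ one reads off $y_2(x)=\sigma(X_1)^{-1}(x-c)$, that is $y_2(x+c)=\sigma(y_1)(x)$, which proves the claim and hence the lemma. I expect the main obstacle to lie precisely in this last inversion step: one must make rigorous both the passage to the inverse functions $X_i$ in the Puiseux setting and the duality guaranteeing that inverting a conjugate branch of $X_1$ returns a Galois conjugate of $y_1$, rather than merely some other Puiseux series annihilated by $F$. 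I would handle this either analytically, via the convergence of the series established elsewhere in the paper, or by a place-theoretic argument on the curve $F=0$; some care is also needed with the normalization of minimal polynomials, which is what lets one drop the multiplicative constant and obtain the clean equality $Q_1(x,y)=Q_2(x+c,y)$.
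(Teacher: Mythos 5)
Your route is genuinely different from the paper's and is essentially sound. The paper argues locally and geometrically: it fixes a generic value $y_0$, uses the degree identity $\deg_x Q_i=\deg_{y'}F$ from Theorem~3.4 of~\cite{aroca2005algebraic} to match the $d$ regular points of $Q_1(x,y_0)=0$ and $Q_2(x,y_0)=0$ with the $d$ roots of $F(y_0,z)=0$ via the first coefficients of the local power series expansions (which solve the ODE by Lemma~2.4 of~\cite{aroca2005algebraic}), and then invokes uniqueness of the power series solution with initial data $(y_0,p_1)$ at a point where $\tfrac{\partial F}{\partial z}$ does not vanish to conclude that a shift of a root of $Q_1$ coincides with a shift of a root of $Q_2$; equality of the irreducible monic polynomials $Q_1(x+x_1,y)$ and $Q_2(x+x_1',y)$ follows, with $c=x_1'-x_1$. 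You instead invert the solutions and run a differential-algebraic conjugation argument: $w_i=1/X_i'$ are roots of $F(u,\cdot)$, which is irreducible over $\field(u)$ by Gauss's lemma since $\deg_{y'}F\ge 1$ (in the degenerate case $F\in\field[y']$, irreducibility over the algebraically closed field $\field$ forces $F=y'-a$ and the lemma is immediate); a $\field(u)$-isomorphism with $\sigma(w_1)=w_2$ commutes with $d/du$ by uniqueness of extensions of derivations in characteristic zero, and the constants of $\overline{\field(u)}$ are exactly $\field$, giving $X_2=\sigma(X_1)+c$. Your argument is more self-contained -- it needs neither the degree formula nor the local-expansion lemma from~\cite{aroca2005algebraic}, and it is purely formal over any algebraically closed field of characteristic zero -- whereas the paper's method yields an explicit matching of branches and reuses machinery already present in its references.

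Two points need care, and the obstacle you flag at the end is in fact easier than you fear. First, $w_1$ and $w_2$ a priori live in Puiseux fields centered at different points (at $u=y_1(0)$ and $u=y_2(0)$, respectively, or at infinity if a solution has a pole); to get $\sigma$, take the algebraic closures $L_1$, $L_2$ of $\field(u)$ inside the two Puiseux fields, choose an isomorphism $L_1\to L_2$ over $\field(u)$, and compose with an automorphism of $L_2$ so that $w_1\mapsto w_2$; note $X_1\in L_1$ because $Q_1(X_1(u),u)=0$. Second -- and this disposes of the inversion worry -- with this choice $\sigma(X_1)$ is a \emph{concrete} Puiseux series in $L_2$, and the relation $X_2-\sigma(X_1)=c$ reads $\sigma(X_1)=X_2-c$ explicitly. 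Applying $\sigma$ to $Q_1(X_1(u),u)=0$ gives $Q_1(X_2(u)-c,u)=0$, and composing with $u=y_2(x+c)$, using $X_2(y_2(\xi))=\xi$, yields $Q_1(x,y_2(x+c))=0$ directly; no duality between conjugate branches and conjugate inverses is needed. Since $Q_2(x+c,y)$ is irreducible and is, after the same monic normalization the paper uses, the minimal polynomial of $y_2(x+c)$, while $Q_1$ is irreducible, monic, and annihilates $y_2(x+c)$, the equality $Q_1(x,y)=Q_2(x+c,y)$ follows. The remaining technicalities (compositional inversion of a non-constant Puiseux series and the chain rule $X_i'=1/w_i$) are routine in characteristic zero.
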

\begin{proof}
From Theorem 3.4 in~\cite{aroca2005algebraic} we know that $$d=\deg(Q_i,x)=\deg(F,y').$$
For the proof we need $y_0 \in \field$ such that all curve points $\qq=(y_0,p_0) \in \V_\field(F)$ and $(x_0,y_0) \in \V_\field(Q_i)$, for $i \in \{1,2\}$, are somehow generic points.
To be precise, let $y_0 \in \field$ be such that
the number of different roots of $F(y_0,z) \in \field[z]$ and of $Q_1(x,y_0), Q_2(x,y_0) \in \field[x]$ are maximal, namely equal to $d$. 
Note that by the irreducibility of $F$ and $Q_i$, there are only finitely many exceptional values for $y_0 \in \field$ where this is not fulfilled.
Let $p_1,\ldots,p_d \in \field$, $x_1,\ldots,x_d \in \field$ and $x_1',\ldots,x_d' \in \field$ be the distinct roots of $P(y_0,z)$, $Q_1(x,y_0)$ and $Q_2(x,y_0)$, respectively. 
Centered at $(x_j,y_0) \in \V_\field(Q_1)$, for $j \in \{1,\ldots,d\}$, we can compute by the Newton polygon method for algebraic equations the Puiseux expansions. 
Since every point $(x_j,y_0) \in \V_\field(Q_1)$ is regular, the expansion is unique and a formal power series
$$\varphi_j(x)=y_0+\sum_{k \geq 1} a_{j,k}(x-x_j)^k.$$
Moreover, from~\cite[Lemma 2.4]{aroca2005algebraic} it follows that $\varphi_j(x)$ are solutions of the differential equation $F(y,y')=0$.
In particular, $$F(\varphi_j(x_j),\varphi_j'(x_j))=F(y_0,a_{j,1})=0.$$
By the choice of $y_0$, $\{a_{1,1},\ldots,a_{d,1}\}=\{p_1,\ldots,p_d\}.$
Similarly, for $Q_2$ and its Puiseux expansions $$\psi_j(x)=y_0+\sum_{k \geq 1} a'_{j,k}(x-x_j')^k$$ we obtain $\{a'_{1,1},\ldots,a'_{d,1}\}=\{p_1,\ldots,p_d\}.$
Without loss of generality we can assume that $a_{1,1}=p_1=a'_{1,1}$. 
Since $F$ is independent of $x$ and Cauchy-Kovalevskaya's Theorem is applicable, because $(y_0,p_1)$ is a regular curve point with $\tfrac{\partial F}{\partial z}(y_0,p_1) \ne 0$, all coefficients $a_{1,k}=a'_{1,k}$ coincide and $$y(x)=y_0+\sum_{k \ge 1} a_{1,k}\,x^k$$ is the unique solution of the differential equation $F(y,y')=0$ with the initial condition $(y(0),y'(0))=(y_0,p_1)$ centered around the origin.
Now the irreducible polynomials $\bar Q_1(x,y)=Q_1(x+x_1,y)$ and $\bar Q_2(x,y) = Q_2(x+x_1',y)$ have the common root $y(x)$, which is only possible if they divide each other. 
Since minimal polynomials are monic, the statement follows for $c=x_1'-x_1$.
\end{proof}

\begin{theorem}\label{THM:AllMinimalPolynomial}
Let $F \in \field[y,y']$ be irreducible and let $y(x)$ be a non-constant algebraic solution of the differential equation $F(y,y')=0$ with minimal polynomial $Q \in \field[x,y]$.
Then all non-constant formal Puiseux series solutions of the differential equation $F(y,y')=0$ are algebraic and given by $Q(x+c,y)$, where $c \in \field$.
\end{theorem}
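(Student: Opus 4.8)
The plan is to obtain this theorem as an almost immediate consequence of Theorem~\ref{THM:AllAlgebraic} and Lemma~\ref{lemma:AllMinimalPolynomial}; no genuinely new argument is needed, and the work lies only in assembling these two results and fixing normalizations. First I would apply Theorem~\ref{THM:AllAlgebraic}: since $F$ is irreducible and $y(x)$ is a non-constant algebraic solution of $F(y,y')=0$, every non-constant formal Puiseux series solution of this equation is algebraic over $\field(x)$. This already establishes the first assertion of the statement.

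For the description of the minimal polynomials, let $\bar y(x) \in \field\Pu$ be an arbitrary non-constant Puiseux series solution. By the previous step it is algebraic, so it has a minimal polynomial $\bar Q \in \field[x,y]$, which is irreducible and, by convention, monic in $y$. I would then invoke Lemma~\ref{lemma:AllMinimalPolynomial} applied to the pair of non-constant algebraic solutions $y(x)$ and $\bar y(x)$: it produces a constant $c \in \field$ with $\bar Q(x,y) = Q(x+c,y)$. Hence the minimal polynomial of every non-constant solution is of the claimed form $Q(x+c,y)$.

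The only point warranting a separate (and short) argument is the converse inclusion, namely that for each $c \in \field$ the polynomial $Q(x+c,y)$ actually arises as the minimal polynomial of some non-constant solution, so that the family $\{Q(x+c,y) : c \in \field\}$ is exactly the set of minimal polynomials. Here I would use that $F$ is autonomous: translation invariance shows that $y(x+c)$ is again a solution of $F(y,y')=0$, and from $Q(x,y(x))=0$ we get $Q(x+c,\,y(x+c))=0$, so the irreducible polynomial $Q(x+c,y)$ is, up to the monic normalization, the minimal polynomial of $y(x+c)$. Since $y$ is algebraic its Puiseux expansion is convergent, so the shift $y(x+c)$ is well defined as an element of $\field\Pu$ and no formal difficulty arises. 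I do not anticipate a serious obstacle: the substantive content resides in Theorem~\ref{THM:AllAlgebraic} and Lemma~\ref{lemma:AllMinimalPolynomial}, and the present statement is essentially their combination, the only care being the monic normalization of minimal polynomials and the appeal to autonomy for completeness.
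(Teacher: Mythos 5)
Your forward direction is exactly the paper's: algebraicity of every non-constant solution comes from Theorem~\ref{THM:AllAlgebraic}, and the identification of its minimal polynomial as $Q(x+c,y)$ is Lemma~\ref{lemma:AllMinimalPolynomial}. The gap is in your converse step, in two respects. First, the shift $y(x+c)$ is not a formal operation on Puiseux series (each coefficient of the shifted series would be an infinite sum), and your justification via convergence is unavailable here: throughout Section~\ref{sec-algebraic} the field $\field$ is an arbitrary algebraically closed field of characteristic zero, not $\C$ — the paper only specializes to $\C$ later, for Theorem~\ref{theorem-convergence2}. So "translation invariance" cannot be invoked as stated without an embedding argument or a places/re-expansion argument that you do not supply. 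Second, even granting the shift, you exhibit only \emph{one} root of $Q(x+c,y)$ (namely $y(x+c)$, one branch) as a solution, whereas the intended content of "given by $Q(x+c,y)$" — and what the paper devotes the bulk of its proof to — is that \emph{all} non-constant roots of $Q(x+c,y)$ satisfy $F(y,y')=0$, so that the polynomial family faithfully describes the solution set. An irreducible polynomial being the minimal polynomial of one solution does not by itself make its remaining $\deg_y Q - 1$ roots solutions; one would at least need an extra argument, e.g.\ that $K(x)$-automorphisms of $\overline{\field(x)}$ commute with the derivation and fix $F$, which you never make.

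The paper's proof avoids both problems purely algebraically: it computes the differential pseudo-remainder of $F$ modulo $Q$,
\begin{equation*}
R = I^{k_1} S^{k_2}\, F(y,y') - A_1\, Q - A_2\, Q' ,
\end{equation*}
shows $R=0$ by the resultant/irreducibility argument already used in Theorem~\ref{THM:AllAlgebraic}, and then observes that this polynomial identity is stable under the substitution $x \mapsto x+c$. Evaluating at any non-constant root $\bar y(x)$ of $Q(x+c,y)$, one has $Q(x+c,\bar y)=0$, hence (by formal differentiation, which \emph{is} legitimate on Puiseux series) $Q'(x+c,\bar y,\bar y')=0$, while minimality of $Q$ forces the initial and separant to be non-zero at $\bar y$; the identity then yields $F(\bar y,\bar y')=0$ for every such root at once, over any algebraically closed $\field$ of characteristic zero. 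You should replace your translation argument by this identity argument (or, at minimum, restrict to $\field=\C$, fix a branch for the shift, and add the conjugation argument for the remaining roots — but that is strictly weaker than what the paper proves).
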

\begin{proof}
First let us prove that, for $c \in \field$, all roots of $Q(x+c,y)$ are indeed solutions of the differential equation $F(y,y')=0$. 
The pseudo-remainder of $F$ with respect to $Q$ is
\begin{equation*}
R(x,y) = I(x)^{k_1}\,S(x,y)^{k_2}\,F(y,y') - A_1\,Q(x,y) - A_2\,Q'(x,y,y')
\end{equation*}
where $I=\init(Q) \in \field[x], S=\sepa(Q) \in \field[x,y], A_1,A_2 \in \field[x,y,y'], k_1,k_2 \in \N$ and $\deg(R,y)<\deg(Q,y)$. 
Since $Q$ is irreducible with the root $y(x)$, $R$ is the zero-polynomial (compare to the proof of Theorem~\ref{THM:AllAlgebraic}). 
In the above equation, substitute $x$ by $x+c$ and set $\bar Q(x,y)=Q(x+c,y)$. 
Note that $\bar Q'(x,y,y')=Q'(x+c,y,y')$ and the initial $\bar I$ and the separant $\bar S$ of $\bar Q$ are $I(x+c)$ and $S(x+c,y)$, respectively. 
Then
\begin{equation*}
0 = \bar I(x)^{k_1}\,\bar S(x,y)^{k_2}\,F(y,y') - \bar A_1\,\bar Q - \bar A_2\,\bar Q',
\end{equation*}
where $\bar A_1(x,y,y')=A_1(x+c,y,y'), \bar A_2(x,y,y')=A_2(x+c,y,y')$. 
Let $\bar y(x)$ be a non-constant root of $\bar Q$. 
Hence, $\bar Q'(x,\bar y(x), \bar y'(x))=0$, $\bar I(x)\,\bar S(x,\bar y(x)) \ne 0$ and consequently, $F(\bar y(x),\bar y'(x)) = 0.$
The converse direction is Lemma~\ref{lemma:AllMinimalPolynomial}.
\end{proof}

\section{Algebraic-Geometric Dimension of Differential Systems}\label{sec-algebraicGeometricDimension}
Let $\field$ be an algebraically closed field of characteristic zero, considered as differential field of constants under the derivation $'$, 
and let $\exfield$ be a differential extension field of $\field$. 
For $n \ge 1$ we denote by $\exfield\{ y_1, \ldots, y_n \}$ the differential polynomial ring in the indeterminates $y_1,\ldots,y_n$. 
We consider systems of algebraic ordinary differential equations
in several differential indeterminates of algebraic dimension 
equal to one in the following sense.

\begin{definition}\label{def-dimension}
Let $F_1,\ldots, F_M$, $U_1,\ldots,U_N \in \exfield[y_1,\ldots,y_1^{(m_1)},\ldots,y_n,\ldots,y_n^{(m_n)}]$ be differential polynomials effectively depending on $y_1^{(m_1)},\ldots,y_n^{(m_n)}$ for some $m_1$, \ldots, $m_n \in \N$. 
Then we define the \textit{(al\-ge\-bra\-ic) dimension} of the corresponding system
\begin{equation}\label{prelim_EQ-AODESystem}
\sys= \{ F_1 = 0, \ldots, F_M = 0 , U_1 \ne 0,\ldots, U_N \ne 0 \}
\end{equation}
as the dimension of $\Va_{\overline{\exfield}}(\sys) \subset \overline{\exfield}^{m_1+\cdots+m_n+n}$ (see Subsection~\ref{subsec:Thomas} for the precise definition of $\Va_{\overline{\exfield}}(\sys)$).
\end{definition}

In what follows we often let $\exfield = K(x)$ with $' = \textrm{d}/\textrm{d}x$.
In that case we would not consider systems~\eqref{prelim_EQ-AODESystem} 
with equations or inequations involving only $x$.

\begin{example}
The differential systems $\{ y_1'^2+y_2^3=0, 2y_1-y_1'y_2=0, y_1 \ne 0 \}$ and $\{ y'+x=0 \}$ have al\-ge\-bra\-ic dimension one, whereas the system
$\{ y'+y=0, y'' \ne 0 \}$ has al\-ge\-bra\-ic dimension two.
\end{example}

Let us remark that $\Va_{\overline{\exfield}}(\sys)$ may be a strict subset of $\Va_{\overline{\exfield}}(\{F_1,\ldots,F_M\})$.

Note that our definition of dimension is purely algebraic and does not correspond to notions of differential dimension
presented, e.g., in~\cite{ritt1950differential} or~\cite{hegemannPhD}. 
In particular, it is not necessary to consider the differential ideal generated by~$\sys$ nor its set of generic solutions. 
As Example~\ref{ex-proj} shows, there are some systems which have algebraic dimension equal to one, but their differential dimension is bigger.

\begin{remark}\label{rem:Dimension}
There are several methods for computing the dimension of an algebraic set, which makes it easy to verify an assumption on the dimension imposed on a system~\eqref{prelim_EQ-AODESystem}. 
For example, for a given system $\sys$ as in~\eqref{prelim_EQ-AODESystem}, an \textit{algebraic} Thomas decomposition $\sys_1,\ldots,\sys_r$ of~$\sys$ 
can be computed (cf., e.g.,~\cite[Subsection~2.2.1]{robertz2014formal}). 
The dimension of $\Va_{\overline{\exfield}}(\sys_i)$ is the difference of the dimension of the ambient affine space
and the number of (distinct) leading variables of the equations in $\sys_i$. 
Then the dimension of~$\Va_{\overline{\exfield}}(\sys)$ is the maximum of the dimensions of the $\Va_{\overline{\exfield}}(\sys_i)$.
Note that each system is a finite set of equations and inequations, so that only finitely
many indeterminates occur, and for our purposes, the coordinate ring of the
ambient affine space is of the form $\field[ y_1,\ldots,y_1^{(m_1)},\ldots,y_n,\ldots,y_n^{(m_n)}]$ for certain $m_1$, \ldots, $m_n \in \N$.
\end{remark}

We are going to explain, in the following theorem, how the algebraic dimension is affected by the construction of a \textit{differential} Thomas decomposition. 
The resulting simple differential systems may refer to different ambient affine spaces in general.

\begin{theorem}\label{thm:ThomasDimension}
Let $\sys$ be a differential system as in~\eqref{prelim_EQ-AODESystem}, of al\-ge\-bra\-ic dimension $d$. 
Then there exists a Thomas decomposition of~$\sys$ (with respect to $<$)
with completely reduced equations and inequations
whose simple differential systems all
have al\-ge\-bra\-ic dimension less than or equal to $d$.
\end{theorem}

\begin{proof}
We recall that a Thomas decomposition of a \textit{differential} system $\sys$ (consisting of
ordinary differential polynomials) can be obtained as follows.
We repeat (if necessary) two stages, namely the computation of an algebraic Thomas decomposition and
differential pseudo-reductions, in a loop.
In the first stage, iterated Euclidean pseudo-reduction is applied to pairs of left hand sides of equations and
inequations of the system, in order to obtain a system that is simple in the algebraic sense (cf.\ Definition~\ref{de:algsimple}).
In general this process requires case distinctions so as to ensure non-vanishing (on the solution
set of the system) of initials and discriminants of the polynomials involved, which leads to a splitting of the system into
subsystems, whose solution sets form a partition of the solution set of the original system.
Let $\sys_1$, \ldots, $\sys_r$ be the resulting (sub)systems, which are simple in the algebraic sense.
Due to the above mentioned partition of the solution set of $\sys$,
if $\sys$ has al\-ge\-bra\-ic dimension $d$, then each $\sys_i$
has al\-ge\-bra\-ic dimension at most $d$ (in the same ambient affine space as for $\sys$).

For each system $\sys_i$ obtained so far, the second stage applies differential pseudo-reductions modulo the
equations in $\sys_i$ as explained in Subsection~\ref{subsec:Thomas}, so as to work towards a simple differential
system with completely reduced equations and inequations. Pseudo-reductions with respect to the greatest variable
(with respect to $<$) are performed first.
In this step of our reasoning we may consider reductions modulo \textit{proper} derivatives of equations $G = 0$ in $\sys_i$
only. Let $\indets(\sys_i)$ be the set of coordinates of the ambient affine space for $\sys_i$, say,
$\indets(\sys_i) = \{ y_1,\ldots,y_1^{(m_1)},\ldots,y_n,\ldots,y_n^{(m_n)} \}$,
and let $\leadvars^{=}(\sys_i)$ be the set of leading variables of equations in $\sys_i$.
The pseudo-remainder $R$ of a reduction of $F$ modulo $G$, where $F = 0$ and $G = 0$ are
equations in $\sys_i$, is either zero or not.

If $R$ is zero, then $F = 0$ is omitted from $\sys_i$. 
The leading variable of $F$ may occur in equations of $\sys_i$ whose leading variables are ranked higher than $\lv(F)$ and possibly in inequations. 
Differential pseudo-reductions modulo $G = 0$ eliminate $\lv(F)$ from $\sys_i$ altogether, because any proper derivative of $G$ has degree one in its leading variable. 
We call the resulting system $\tilde{\sys}_i$. 
The cardinality $|\leadvars^{=}(\tilde{\sys}_i)|$ is $|\leadvars^{=}(\sys_i)|-1$ and $|\indets(\tilde{\sys}_i)|<|\indets(\sys_i)|$. 
Hence, the al\-ge\-bra\-ic dimension of $\tilde{\sys}_i$ is the same or smaller as that of $\sys_i$.

If $R$ is a non-zero element of $\exfield$, then the system $\sys_i$ is inconsistent (as differential system)
and is discarded.

Otherwise $R$ is a differential polynomial with a leading variable $\lv(R)$ that is ranked lower
than $\lv(F)$.
After elimination of $\lv(F)$ from $\sys_i$ (using $G$) as described above and replacing $F=0$ by $R=0$, we obtain a new system~$\tilde{\sys}_i$ with $|\indets(\tilde{\sys}_i)|<|\indets(\sys_i)|$.
The indeterminate $\lv(R)$ is certainly an element of $\indets(\sys_i)$,
but either $\lv(R)$ is an element of $\leadvars^{=}(\sys_i)$ or not.
If not, then $|\leadvars^{=}(\tilde{\sys}_i)|=|\leadvars^{=}(\sys_i)|$ so that
the al\-ge\-bra\-ic dimension of the system drops.
If $\lv(R)$ is an element of $\leadvars^{=}(\sys_i)$, then the
al\-ge\-bra\-ic dimension either does not change or drops.

For differential reductions of inequations $F \neq 0$ modulo (\textit{proper} derivatives of)
equations $G = 0$ in $\sys_i$ we have a similar case distinction.
If the pseudo-remainder $R$ is zero, then the system
is inconsistent and is discarded. If $R$ is a non-zero element of $\exfield$, then the
inequation $F \neq 0$ is omitted from $\sys_i$.
Otherwise $R$ is a differential polynomial with a leading variable $\lv(R)$ that is ranked lower
than $\lv(F)$. In both of these cases, after elimination of further occurrences of $\lv(F)$
in $\sys_i$, for the resulting system~$\tilde{\sys}_i$ it holds that $|\indets(\tilde{\sys}_i)|<|\indets(\sys_i)|$. As a consequence, the
al\-ge\-bra\-ic dimension of the system drops at least by one as well, because
$\leadvars^{=}(\sys_i)=\leadvars^{=}(\tilde{\sys}_i)$.

If any differential reduction was performed on a system $\sys_i$ we start over
with the modified system $\tilde{\sys}_i$ and compute an algebraic Thomas decomposition of it, where the ambient affine space may now be different to the one for the
original system $\sys$. This step may lead to further splittings. Differential
pseudo-reduction might have to be performed again, etc.

This loop is a special case for ordinary differential polynomials
of the Thomas Algorithm and it terminates with finitely many simple differential
systems after finitely many steps.
\end{proof}

\begin{example}\label{ex-proj}
Let $\sys=\{ F_1=z'^2+z = 0 , \ F_2=yz'= 0 \}$ be a system of autonomous AODEs, 
defined over $\field = \C$. Let us check that $\sys$ has algebraic dimension one. 
For that we compute an algebraic Thomas decomposition of~$\sys$ with respect to $y<z<z'$:
\begin{align*}
\tilde{\sys}_1 = \{ y=0, z'^2+z = 0 \}, \quad \tilde{\sys}_2 = \{ z=0, z'=0, y \ne 0 \},
\end{align*}
which are of dimension one because the set of leading variables has two elements. 
A differential Thomas decomposition is given by
\begin{align*}
\sys_1 = \{ y=0, z'^2+z = 0, z \ne 0  \}, \ \sys_2 = \{ y=0, z=0 \}, \ \sys_3 = \{ z=0, y \ne 0 \}.
\end{align*}
Note that $\sys_3$ is of dimension two in $\C[y,z,z']$, but this does not contradict Theorem~\ref{thm:ThomasDimension} because $\sys_3$ is, seen as independent system, of algebraic dimension one.
\end{example}

In the following we show that, if the given differential system is of algebraic dimension one, the construction of a differential Thomas decomposition results in simple systems of a very particular form. 
This leads to results on convergence and on the computation of algebraic solutions (cf. Section~\ref{sec-systems} and Section~\ref{sec-systemalgebraic}.)

\begin{proposition}\label{prop:thomas1dim}
Let $\sys$ be a differential system as in~\eqref{prelim_EQ-AODESystem}
and let the ranking $<$ on $\exfield\{ y_1, \ldots, y_n \}$ be defined as in~\eqref{eq:ranking}.
If $\sys$ is simple (with respect to $<$) and of al\-ge\-bra\-ic dimension
at most one, then it is of one of the following types:
\begin{equation}\tag{I}\label{type1:thomas1dim}
\left\{ \quad
\begin{array}{rcll}
G_s(y_1, \ldots, y_s) & = & 0, & \quad s \in \{ 1, \ldots, t-1 \},\\[0.2em]
G_t(y_1, \ldots, y_t, y'_t) & = & 0,\\[0.2em]
G_s(y_1, \ldots, y_t,y_t',y_{t+1},\ldots,y_s) & = & 0, & \quad s \in \{ t+1, \ldots, n \},\\[0.2em]
U(y_1, \ldots, y_t) & \neq & 0,
\end{array}\right.
\end{equation}
for a unique $t \in \{ 1, \ldots, n \}$, where $\lv(G_t)=y_t'$;
\begin{equation}\tag{II}\label{type2:thomas1dim}
\left\{ \quad
\begin{array}{rcll}
G_s(y_1, \ldots, y_s) & = & 0, & \quad s \in \{ 1, \ldots, n \} \setminus \{ t \},\\[0.2em]
U(y_1, \ldots, y_t) & \neq & 0,
\end{array}\right.
\end{equation}
for a unique $t \in \{ 1, \ldots, n \}$;
\begin{equation}\tag{III}\label{type3:thomas1dim}
\left\{ \quad
\begin{array}{rcll}
G_s(y_1, \ldots, y_s) & = & 0, & \quad s \in \{ 1, \ldots, n \},
\end{array}\right.
\end{equation}
where, in each of the systems~\eqref{type1:thomas1dim}, \eqref{type2:thomas1dim}, \eqref{type3:thomas1dim},
for all $s$ in the admissible range, we have $G_s \in \exfield[y_1, \ldots, y_s]$ with
either $\lv(G_s) = y_s$ or $G_s$ is the zero polynomial and all other equations in the system
are independent of $y_s$, and $U$ is a non-zero element of $\exfield$ or $U \in \exfield[y_1,\ldots,y_t]$ with $\lv(U_t)=y_t$.
\end{proposition}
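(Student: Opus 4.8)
The plan is to read off the shape of $\sys$ from just two inputs: the dimension formula for simple systems recalled in Remark~\ref{rem:Dimension}, and the reducedness conditions of Definition~\ref{de:diffsimple}. I would write the coordinates of the ambient affine space in blocks $y_j, y_j', \ldots, y_j^{(m_j)}$ for $1 \le j \le n$, where $m_j$ is the highest order of $y_j$ occurring in $\sys$, so that the number of coordinates is $D = \sum_{j=1}^{n}(m_j+1)$. Since $\sys$ is simple, Remark~\ref{rem:Dimension} gives that its algebraic dimension is $D - L$, where $L$ is the number of distinct leading variables of the \emph{equations} of $\sys$.

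The key combinatorial observation is that at most one derivative of a given $y_j$ can be the leading variable of an equation. Indeed, if two equations had leading variables $y_j^{(k)}$ and $y_j^{(l)}$ with $k < l$, then, since $y_j^{(l)}$ is a proper derivative of $y_j^{(k)}$, the equation with the higher leading variable would be differentially reducible modulo the other one (using the suitable derivative of the latter), contradicting condition~\eqref{diffsimple:2} of Definition~\ref{de:diffsimple}. Writing $d_j \in \{0,1\}$ for the number of equation leading variables in block $j$, we get $L = \sum_j d_j \le n$, whence
\[ \dim \Va_{\overline{\exfield}}(\sys) = D - L = n + \sum_{j=1}^{n} m_j - L \ge \sum_{j=1}^{n} m_j. \]
Together with the hypothesis $\dim \le 1$ this forces $\sum_j m_j \le 1$: either no derivative occurs at all, or exactly one variable, say $y_t$, occurs with $m_t = 1$ and all others with $m_j = 0$.

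I would then split into these two cases. If $\sum_j m_j = 0$, the system is purely algebraic in $y_1, \ldots, y_n$, so $D = n$ and $\dim \le 1$ gives $L \in \{n-1, n\}$; the value $L = n$ (every $y_s$ an equation leader) yields type~\eqref{type3:thomas1dim}, and $L = n-1$ (single non-leading coordinate $y_t$) yields type~\eqref{type2:thomas1dim}. If $\sum_j m_j = 1$, then $D = n+1$, and $\dim \le 1$ forces $L = n$, i.e.\ exactly one equation leader per block. The remaining point, which I expect to require the most care, is to show that the equation leader of block $t$ is $y_t'$ and not $y_t$. Assuming instead that some equation $F$ has $\lv(F) = y_t$, I would argue that then no equation and no inequation of the reduced system can involve $y_t'$: any polynomial containing $y_t'$ either has $y_t'$ as its leading variable, hence is reducible modulo $F$ (contradicting Definition~\ref{de:diffsimple}), or has leading variable $y_s$ with $s > t$; in the latter case, as the unique equation with leader $y_s$ it would fail to be reduced modulo $F$ since it contains the proper derivative $y_t'$ of $\lv(F) = y_t$, while as an inequation it would violate the distinctness of leading variables. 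Hence $y_t'$ would occur nowhere, so $m_t = 0$, contrary to assumption; thus $\lv(G_t) = y_t'$ and we obtain type~\eqref{type1:thomas1dim}.

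Finally, I would record the precise variable dependencies. Because the ranking~\eqref{eq:ranking} places all of $y_1, \ldots, y_{s-1}$ and their derivatives below $y_s$, an equation with leading variable $y_s$ lies in $\exfield[y_1, \ldots, y_s]$ in cases~\eqref{type2:thomas1dim} and~\eqref{type3:thomas1dim}, while in case~\eqref{type1:thomas1dim} it may additionally involve $y_t'$ precisely when $s > t$, matching the displayed forms. For the inequation, condition~\eqref{diffsimple:1} requires $\lv(U)$ to differ from every equation leader; when a free coordinate exists (types~\eqref{type1:thomas1dim} and~\eqref{type2:thomas1dim}) it is $y_t$, so $U$ is either a nonzero element of $\exfield$ or satisfies $\lv(U) = y_t$ with $U \in \exfield[y_1, \ldots, y_t]$, whereas in type~\eqref{type3:thomas1dim} no coordinate is free and $U$ can only be a nonzero constant. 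This completes the classification.
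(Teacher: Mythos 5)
Your proposal is correct and takes essentially the same approach as the paper: both rest on the observation that differential reducedness permits at most one equation leader per differential indeterminate (with any inequation leader ranked strictly below it), combine this with the count that the dimension of a simple system equals the number of ambient coordinates minus the number of equation leaders, and then split into the cases $\sum_j m_j = 0$ and $\sum_j m_j = 1$; the paper merely packages the same counting via its recursive solution description and a minimal subset $\{ z_1, \ldots, z_q \}$ of actually occurring indeterminates, which also accounts explicitly for the free variables ($G_s$ the zero polynomial) that you leave implicit. Be aware that your step excluding an equation with leader $y_t$ while $y_t'$ occurs inside a higher-ranked equation invokes reducedness of coefficients (the ``completely reduced'' notion), since pseudo-reduction as defined in Subsection~\ref{subsec:Thomas} only compares leading variables involving the same indeterminate --- but the paper's own proof makes the identical tacit assumption at the corresponding point (the case $\overline{m}_j < m_j$, where it asserts that no equation involves $z_j^{(m_j)}$), and Theorem~\ref{thm:ThomasDimension} supplies completely reduced systems, so this is harmless in context.
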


\begin{proof}
Since $\sys$ is
of al\-ge\-bra\-ic dimension at most one,
there exist a minimal subset $\{ z_1, \ldots, z_q \}$
of $\{ y_1, \ldots, y_n \}$, say, of cardinality $q$, and non-negative integers $m_1$, \ldots, $m_q$
such that all equations and inequations
of $\sys$ are elements
of $\exfield[z_1, \ldots, z_1^{(m_1)}, \ldots, z_q, \ldots, z_q^{(m_q)}] \setminus \exfield$
and $\Va_{\overline{\exfield}}(\sys) \subset \overline{\exfield}^{m_1+\cdots+m_q+q}$ is of dimension at most one.

Considered as an algebraic system, $\sys$ admits solutions
\[
(\eta_{1,0}, \ldots, \eta_{1,m_1}, \eta_{2,0}, \ldots, \eta_{2,m_2}, \ldots,
\eta_{q,0}, \ldots, \eta_{q,m_q}) \in \overline{\exfield}^{m_1+ \cdots + m_q+q},
\]
where the coordinates of the affine space are arranged in accordance with the ranking~\eqref{eq:ranking}.
By simplicity of $\sys$, every solution is obtained as follows (cf.\ also
\cite[Remark~2.2.5]{robertz2014formal}). Suppose that all coordinates of a solution
preceding $\eta_{k,l}$ are already determined. If there is an equation $G_j = 0$ in $\sys$
with $\lv(G_j) = z_k^{(l)}$ and degree $d$ in $\lv(G_j)$, then there are exactly $d$
different $\eta_{k,l} \in \overline{\exfield}$ such that $G_j(\eta_{1,0}, \ldots, \eta_{k,l}) = 0$,
because the initial and discriminant of the polynomial $G_j$ in $\lv(G_j)$ do not
vanish when evaluated at the preceding coordinates, and $\overline{\exfield}$ is algebraically closed.
Similarly, if there is an inequation $G_j \neq 0$ in $\sys$
with $\lv(G_j) = z_k^{(l)}$ and degree $d$ in $\lv(G_j)$, then all $\eta_{k,l} \in \overline{\exfield}$
except $d$ many satisfy $G_j(\eta_{1,0}, \ldots, \eta_{k,l}) \neq 0$.
If there is neither an equation nor an inequation in $\sys$
with leading variable $z_k^{(l)}$, then $\eta_{k,l} \in \overline{\exfield}$ can be chosen arbitrarily.

We enumerate the differential polynomials $G_j$ occurring 
as left hand sides of equations and inequations in $\sys$
in increasing order with respect to their pairwise distinct leading variables $\lv(G_j)$. 
Since in $\sys$ each equation is differentially
reduced modulo the other equations and each inequation is differentially reduced
modulo the equations, for every $k \in \{ 1, \ldots, q \}$ there exists at most
one non-negative integer $\overline{m}_k \le m_k$ such that $z_k^{(\overline{m}_k)}$ is
the leading variable of some equation $G_j = 0$, and if $\overline{m}_k$ exists
and $z_k^{(n_k)}$ is the leading variable of an inequation in $\sys$, then $n_k < \overline{m}_k$.

Suppose that all $\overline{m}_1$, \ldots, $\overline{m}_q$ exist,
i.e., for all $k \in \{ 1, \ldots, q \}$
some equation in $\sys$ has leading variable $z_k^{(\overline{m}_k)}$.
If $\overline{m}_1 = m_1$, \ldots, $\overline{m}_q = m_q$, then, since $\sys$
is of al\-ge\-bra\-ic dimension at most one, either $m_1 = \ldots = m_q = 0$,
in which case $\sys$ is a system of type~\eqref{type3:thomas1dim},
or $m_j = 1$ for a unique $j \in \{ 1, \ldots, q \}$ and $m_k = 0$ for all $k \neq j$,
in which case $\sys$ is a system of type~\eqref{type1:thomas1dim} with $t = j$.
On the other hand, if $\overline{m}_j < m_j$ for some $j \in \{ 1, \ldots, q \}$,
then no equation in $\sys$ involves $z_j^{(m_j)}$, and since $\sys$ is
of al\-ge\-bra\-ic dimension at most one and all
$\overline{m}_1$, \ldots, $\overline{m}_q$ exist, $\sys$ contains no inequation
and is of type~\eqref{type3:thomas1dim}.

Now suppose that $\overline{m}_j$ does not exist.
According to the recursive solution procedure recalled above,
the coordinates $\eta_{j,0}$, $\eta_{j,1}$, \ldots, $\eta_{j,m_j}$ attain
arbitrary values in $\exfield$, possibly with the exception of finitely many values
if $\sys$ contains an inequation with leading variable $z_j^{(n_j)}$.
Since $\sys$ is of al\-ge\-bra\-ic dimension at most one,
we conclude that at most one $\overline{m}_j$ may not exist, and in that case
we have $m_j = 0$, and $\sys$ is of type~\eqref{type2:thomas1dim} with $t = j$.
\end{proof}

Note that a differential system of type~\eqref{type1:thomas1dim}
or~\eqref{type2:thomas1dim} or~\eqref{type3:thomas1dim} in Proposition~\ref{prop:thomas1dim} is of algebraic dimension $1$,$1$ or $0$, respectively. 
In Example~\ref{ex-proj}, the systems~$\sys_1, \sys_2$ and~$\sys_3$ are of type~\eqref{type1:thomas1dim}, \eqref{type3:thomas1dim} and~\eqref{type2:thomas1dim}, respectively.

\begin{corollary}\label{cor:ThomasDim1}
Let $\sys$ be a differential system as in~\eqref{prelim_EQ-AODESystem}
and let the ranking $<$ on $\exfield\{ y_1, \ldots, y_n \}$ be defined as in~\eqref{eq:ranking}.
If $\sys$ is of al\-ge\-bra\-ic dimension one, then there exists
a Thomas decomposition of $\sys$ (with respect to $<$) each of whose simple differential systems
is of one of the types~\eqref{type1:thomas1dim}, \eqref{type2:thomas1dim} and \eqref{type3:thomas1dim} from Proposition~\ref{prop:thomas1dim}.
\end{corollary}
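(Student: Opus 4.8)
The plan is to obtain the decomposition by combining the two immediately preceding results, with no new construction required. First I would invoke Theorem~\ref{thm:ThomasDimension} with $d = 1$: since $\sys$ is assumed to be of algebraic dimension one, the theorem yields a Thomas decomposition of $\sys$ (with respect to $<$) into simple differential systems $\sys_1, \ldots, \sys_r$ with completely reduced equations and inequations, each of algebraic dimension at most one.

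Next I would apply Proposition~\ref{prop:thomas1dim} to each $\sys_i$ separately. Here I must check that the hypotheses of the proposition are met: each $\sys_i$ is simple with respect to $<$, since having completely reduced equations and inequations is a strengthening of simplicity in the sense of Definition~\ref{de:diffsimple}, and each $\sys_i$ is of algebraic dimension at most one by the conclusion of Theorem~\ref{thm:ThomasDimension}. The proposition then forces every $\sys_i$ to be of one of the types~\eqref{type1:thomas1dim}, \eqref{type2:thomas1dim}, \eqref{type3:thomas1dim}, which is exactly the assertion of the corollary.

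The one point that deserves a remark, rather than a genuine obstacle, is that the various $\sys_i$ may be formulated over different ambient affine spaces, as already noted in the statement of Theorem~\ref{thm:ThomasDimension}. This causes no difficulty: Proposition~\ref{prop:thomas1dim} is stated and proved for a single simple system and begins by passing to the minimal subset $\{ z_1, \ldots, z_q \}$ of differential indeterminates (together with the relevant derivation orders) actually occurring in that system. Consequently it applies verbatim to each $\sys_i$ in its own ambient space, and the maximality of the dimension across the subsystems plays no role once the global bound $d = 1$ has been secured. Since all the substantive work has already been carried out in the proofs of Theorem~\ref{thm:ThomasDimension} and Proposition~\ref{prop:thomas1dim}, the corollary follows at once and requires no further argument.
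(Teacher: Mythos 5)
Your proposal is correct and matches the paper's own proof, which consists of exactly the same one-line combination of Theorem~\ref{thm:ThomasDimension} (with $d=1$) and Proposition~\ref{prop:thomas1dim} applied to each resulting simple system. Your additional checks---that complete reducedness strengthens simplicity in the sense of Definition~\ref{de:diffsimple}, and that the possibly different ambient affine spaces cause no trouble because the proposition passes to the minimal set of indeterminates occurring in each system---are sound and merely make explicit what the paper leaves implicit.
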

\begin{proof}
This is a combination of Theorem~\ref{thm:ThomasDimension} and
Proposition~\ref{prop:thomas1dim}.
\end{proof}

\section{Solutions of Differential Systems of Dimension One}\label{sec-systems}
Let~$\sys$ be a differential system as in~\eqref{prelim_EQ-AODESystem} with coefficients in the algebraically closed field $\field$, i.e.\ the differential polynomials in~$\sys$ have constant coefficients. 
We are interested in formal Puiseux series solution vectors
\[
Y(x)=(y_1(x),\ldots,y_n(x)) \in \field\Pu^n
\]
of~$\sys$, expanded around $x_0 \in \{ 0, \infty \}$.

\begin{remark}\label{rem-NegativeOrder}
The components of the solution $Y(x)=(y_1(x),\ldots,y_n(x)) \in \field\Pu^n$ can be assumed to have non-negative order.
Otherwise, choose $I \subseteq \{1,\ldots,n\}$ as the set of indices where the order is negative and perform the change of variable $\tilde{y}_i=1/y_i$ in $\sys$ for every $i \in I$.
More precisely, let $\{ y_1,\ldots,y_1^{(m_1)},\ldots,y_n,\ldots,y_n^{(m_n)} \}$ be the ambient space for~$\sys$ and let $m=m_1+\cdots+m_n+n$. 
If the differential polynomials $F_1,\ldots,F_M \in \sys$ have some $y_1,\ldots,y_n$ as factors, divide these equations by such factors. 
Then we define for every $i \in I$ the mapping
$$\Phi_i:\field^{m_i+1}\setminus \V_{\field}(u_0) \rightarrow \field^{m_i+1} \setminus \V_{\field}(w_0); (u_0,\ldots,u_{m_i})\mapsto (w_0,\ldots,w_{m_i}),$$ where $w_0=1/u_0$ and
$$w_j= \dfrac{-u_{0}^{j-1}\,u_j+R_j(u_0,\ldots,u_{j-1})}{u_{0}^{j+1}}$$
for some polynomial $R_j$.
Since the equality above is linear in $u_j$, $\Phi_i$ is birational.
For all $i \notin I$ let us define $\Phi_i$ as the identity map on $\field^{m_i+1}$.
Then the mapping $$\boldsymbol{\Phi}(u_{1,0},\ldots,u_{n,m_n})=(\Phi_1(u_{1,0},\ldots,u_{1,m_1}),\ldots,\Phi_n(u_{n,0},\ldots,u_{n,m_n}))$$ is birational on $\field^m \setminus \bigcup_{i \in I}\V_{\field}(y_i)$.
Let us apply $\boldsymbol{\Phi}$ to $\sys$ and call the resulting system $\sys^*$.
The Zariski closure of $\boldsymbol{\Phi}(\Va_{\field}(\sys))$ is the Zariski closure of $\Va_{\field}(\sys^*)$. 
In particular, $\dim(\Va_{\field}(\sys))=\dim(\Va_{\field}(\sys^*))$ and one may proceed with $\sys^*$ instead of $\sys$.
\end{remark}

In the following we will again impose that the given differential system is of algebraic dimension one. 
This allows to use in particular Proposition~\ref{prop:thomas1dim}.

\begin{remark}\label{rem-solutionDecomposition}
Clearly, by definition of Thomas decomposition (cf.~\eqref{eq:partitionsolutions}),
the set of solution vectors of $\sys$ is partitioned into the subsets of solution vectors of the simple differential systems in Proposition~\ref{prop:thomas1dim}. 
Systems of type~\eqref{type2:thomas1dim} or~\eqref{type3:thomas1dim} are algebraic systems.
Of main interest for our further study of differential systems of dimension are systems of type~\eqref{type1:thomas1dim}.

Following the discussion in Proposition~\ref{prop:thomas1dim}, the construction of a differential Thomas decomposition of~$\sys$ may eliminate certain differential indeterminates $y_s$ in some subsystems~$\sys_i$, see e.g. Example~\ref{ex-proj}. 
Then the solutions of~$\sys_i$ do not include the components corresponding to the $y_s$ and when prolonging to a solution of~$\sys$, these components can be chosen arbitrarily. 
In the following we may speak in this case about the \textit{free variables} $y_s$. 
Additionally, for systems of type~\eqref{type2:thomas1dim}, $y_t$ can be chosen arbitrarily except for a finite number of constants possibly excluded by the inequation $U(y_1,\ldots,y_t) \ne 0$. 
In this situation we call $y_t$ a \textit{parametric variable}.

In systems~$\sys_i$ of the type~\eqref{type3:thomas1dim}, the components which are not free variables are constants. 
The same holds for the first $1 \le s <t$ components of systems~$\sys_i$ of the type~\eqref{type2:thomas1dim}. 
The parametric variable $y_t$ can be chosen (almost) arbitrary. 
The following components depend on this choice and, for example, if $y_t$ and free variables are chosen constant, every of the remaining components is again constant or a free variable.  
In systems~$\sys_i$ is of the type~\eqref{type1:thomas1dim}, the first $1 \le s < t$ components of a solution are constants $\eta_1,\ldots,\eta_{t-1} \in \overline{\exfield}$ or a free variable, which might be chosen constant. 
Plug these values into~$\sys$ in order to obtain a system of type~\eqref{type1:thomas1dim} where $G_t(\eta_1,\ldots,\eta_{t-1},y_t,y_t') \in \overline{\exfield}[y_t,y_t']$ is the polynomial with lowest ranked leading variable. 
Hence, we may assume occasionally that $t=1$.

For showing that the components of the solutions are convergent or algebraic, we may replace the components of the free variables in a solution by zero or any other suitable function. 
A parametric variable $y_t$ can be substituted by almost every constant and every non-constant suitable function such as $x$. 
In this case, the next components $y_{t+1}(x),y_{t+2}(x),\ldots$ have to be substituted accordingly.
\end{remark}

\begin{theorem}\label{theorem-convergence2}
Let $\field=\C$. 
Let $\sys$ be a differential system as is~\eqref{prelim_EQ-AODESystem}, of algebraic dimension one, and let $Y(x)=(y_1(x),\ldots,y_n(x)) \in \C\Pu^n$ be a solution of~$\sys$. 
Let $N$ be a non-negative integer. 
Then there exists a solution $\bar Y(x)=(\bar y_1(x),\ldots,\bar y_n(x))$ of~$\sys$ with convergent components such that $Y(x) \equiv \bar Y(x) \mod x^N$.
\end{theorem}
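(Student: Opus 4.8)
The plan is to reduce the theorem to the Artin approximation theorem for algebraic systems by exploiting the structure of the Thomas decomposition guaranteed by Corollary~\ref{cor:ThomasDim1}. First I would invoke Corollary~\ref{cor:ThomasDim1} to obtain a Thomas decomposition of $\sys$ into simple differential systems of types~\eqref{type1:thomas1dim}, \eqref{type2:thomas1dim}, \eqref{type3:thomas1dim}. By~\eqref{eq:partitionsolutions}, the given solution $Y(x)$ lies in $\Sol_{\field\Pu}(\sys_i)$ for exactly one such subsystem $\sys_i$; moreover, any approximating solution $\bar{Y}(x)$ of $\sys_i$ is automatically a solution of $\sys$, so it suffices to produce, for each type, a convergent solution of $\sys_i$ agreeing with $Y(x)$ modulo $x^N$. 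Using Remark~\ref{rem-solutionDecomposition}, I would normalize free and parametric variables (setting free variables to the given series and, where convenient, assuming $t=1$), so that only the genuinely coupled components remain to be approximated.

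Next I would dispose of the easy types. For systems of type~\eqref{type2:thomas1dim} and~\eqref{type3:thomas1dim}, which are purely \emph{algebraic} systems in $y_1,\ldots,y_n$ with no derivatives, the Puiseux series components $y_s(x)$ satisfy algebraic equations $G_s(y_1,\ldots,y_s)=0$ over $\field$. Here I would apply the Artin approximation theorem~\cite{artin1968solutions} directly: the $G_s$ define an algebraic system over the Henselian ring of algebraic power series (after a suitable ramification $x \mapsto x^{1/e}$ to pass from Puiseux to power series), and the formal solution $Y(x)$ can be approximated to order $N$ by a solution whose components are algebraic, hence convergent, power series. The triangular shape ($\lv(G_s)=y_s$ with non-vanishing initial and discriminant on the solution set) ensures the hypotheses of Artin approximation are met and that the approximating tuple again satisfies every equation and the inequation $U \neq 0$.

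The main work, and the main obstacle, is type~\eqref{type1:thomas1dim}, where a single first-order equation $G_t(y_1,\ldots,y_t,y_t')=0$ with $\lv(G_t)=y_t'$ couples the system differentially. The strategy is to eliminate the derivative so as to reduce again to an \emph{algebraic} Artin-approximation problem. After reducing to $t=1$ as in Remark~\ref{rem-solutionDecomposition}, the polynomial $G_t$ becomes an irreducible (or a suitable irreducible factor of an) autonomous first-order AODE $F(y_t,y_t')=0$ of the form treated in Subsection~\ref{sec-algebraic}. The key point is that if the component $y_t(x)$ were algebraic, then by Theorem~\ref{THM:AllAlgebraic} \emph{all} non-constant Puiseux solutions of $F=0$ are algebraic, and by Theorem~\ref{THM:AllMinimalPolynomial} they are all shifts $Q(x+c,y)$ of a single minimal polynomial; one would then choose $c \in \field$ so that the associated convergent algebraic solution agrees with $y_t(x)$ modulo $x^N$. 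The delicate case is when $y_t(x)$ is \emph{not} algebraic. In that case I would still eliminate $y_t'$ by treating $F(y_t,y_t')=0$ together with the relation $y_t' = (y_t)'$ via the resultant/parametrization of the plane curve $F=0$, reducing the differential equation to a first-order separable system whose convergent solutions can be matched to $y_t(x)$ to order $N$ by choosing the constant of integration appropriately; the convergence of such solutions for dimension-one autonomous equations is precisely the content established in~\cite{cano2019existence,cano2020algebraic}, which I would invoke. Finally, having fixed a convergent $\bar{y}_t(x) \equiv y_t(x) \bmod x^N$, the remaining components $\bar{y}_s(x)$ for $s>t$ are obtained by substituting $\bar{y}_t(x)$ and $\bar{y}_t'(x)$ into the algebraic equations $G_s(y_1,\ldots,y_t,y_t',y_{t+1},\ldots,y_s)=0$ and applying Artin approximation once more, as in the algebraic types; since these equations are again triangular with non-vanishing initials and discriminants on the solution set, the approximating tuple $\bar{Y}(x)$ has convergent components and solves $\sys_i$, and therefore $\sys$, completing the proof.
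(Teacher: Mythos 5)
Your global skeleton coincides with the paper's: invoke Corollary~\ref{cor:ThomasDim1} to reduce to simple systems of types~\eqref{type1:thomas1dim}, \eqref{type2:thomas1dim}, \eqref{type3:thomas1dim}, and handle the purely algebraic types~\eqref{type2:thomas1dim} and~\eqref{type3:thomas1dim} by the Artin approximation theorem~\cite{artin1968solutions} — that part matches the paper essentially verbatim. The divergences, and the genuine gaps, are all in your treatment of type~\eqref{type1:thomas1dim}, plus one slip at the normalization stage: you propose to set the free variables ``to the given series.'' Those components are \emph{arbitrary} formal Puiseux series and may diverge; since the equations of the subsystem do not involve the free variables, you must replace these components by convergent approximations — the paper truncates them at order $N$ — or else $\bar Y(x)$ need not have convergent components at all.

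The main gap is that your case distinction (algebraic versus non-algebraic $y_t$) and the machinery you sketch for the non-algebraic branch (resultant/parametrization of the curve $F=0$, a ``separable system,'' matching a constant of integration) are both unnecessary and, as written, not a proof: you never establish that the matched solution exists, is convergent, and agrees with $y_t(x)$ modulo $x^N$. The decisive observation, which is the paper's one-line step, is that after substituting the constant components $\eta_1,\ldots,\eta_{t-1}$ (and the truncated free variables, which do not occur in the equations), the equation $H_t(y_t,y_t')=G_t(\eta_1,\ldots,\eta_{t-1},y_t,y_t')=0$ is a first-order \emph{autonomous} AODE, and by \cite[Theorem 11]{cano2019existence} \emph{every} formal Puiseux series solution of such an equation is already convergent. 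Hence $y_t(x)$ itself is convergent and one simply takes $\bar y_t = y_t$; no shift $c$, no integration constant, and no approximation are needed (in your algebraic branch the same remark applies, since algebraic Puiseux series over $\C$ are convergent). This also dissolves the difficulty hidden in your final step: because $\bar y_t$ equals $y_t$ exactly rather than merely modulo $x^N$, each component $y_s(x)$ with $s>t$ is an \emph{exact} root of $H_s(x,y_s)=G_s(y_1(x),\ldots,y_t(x),y_t'(x),\ldots,y_{s-1}(x),y_s)$, a polynomial in $y_s$ with convergent Puiseux series coefficients whose leading variable is $y_s$ by simplicity, so Puiseux's theorem gives convergence of $y_s(x)$ directly, by induction on $s$ — no further use of Artin is required. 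Your plan instead replaces $y_t$ by an approximation and then applies Artin sequentially to the equations for $s>t$; but after that replacement the original $y_s(x)$ is no longer a formal solution of the perturbed equation $G_s(\ldots,\bar y_t,\bar y_t',\ldots,y_s)=0$, so Artin's hypothesis (a formal solution of the very system being approximated) is not verified, and you would need an additional Hensel-type lifting argument, exploiting the non-vanishing discriminants, that your proposal does not supply.
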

\begin{proof}
By Corollary~\ref{cor:ThomasDim1}, we may restrict our attention to simple systems $\bar{\sys}$ of types~\eqref{type1:thomas1dim}, \eqref{type2:thomas1dim} and~\eqref{type3:thomas1dim}. 
If $Y(x)$ is a solution of a system of type~\eqref{type2:thomas1dim} or~\eqref{type3:thomas1dim}, the result follows by the Artin approximation theorem (see~\cite[Theorem 1.2]{artin1968solutions}).

In the case that~$\bar{\sys}$ is of type~\eqref{type1:thomas1dim}, the components in $Y(x)$ corresponding to the free variables will be substituted by its truncations up to order $N$. 
For $1 \le s < t$, the component $y_s(x)$ is either a constant or corresponds to a free variable. 
Plugging them into $G_t$, we obtain the autonomous differential equation $H_t(y_t,y_t')=G_t(y_1(x),\ldots,y_{t-1}(x),y_t,y_t')=0$ with leading variable $y_t'$. 
Applying~\cite[Theorem 11]{cano2019existence} to $H_t(y_t,y_t')=0$, the component $y_t(x)$ is convergent.
For $s>t$, let us proof by induction that $y_s(x)$ is convergent. 
If $y_s$ is a free variable, then $y_s(x)$ is convergent. 
Otherwise, let us consider
$$H_s(x,y_s)=G_s(y_1(x),\ldots,y_t(x),y_t'(x),\ldots,y_{s-1}(x),y_s)=0.$$
Since the system is simple, the leading variable of~$H_s$ is~$y_s$ and,
by Puiseux's Theorem, $y_s(x)$ is convergent.
\end{proof}

In the proof of Theorem~\ref{theorem-convergence2} we have evaluated the polynomials $G_s$ at Puiseux series in order to obtain the equations $H_s(x,y_s)=0$. 
In case that $y_t(x)$ is algebraic over $\field(x)$, these evaluations can be performed by using its minimal polynomial which leads to algorithmic computations as we show in the following section.

\begin{theorem}\label{thm:existence}
Let $\sys$ be a differential system as is~\eqref{prelim_EQ-AODESystem}, of algebraic dimension one. 
Then it can be decided algorithmically whether~$\sys$ has a formal Puiseux series solution. 
Moreover, it can be decided whether a formal Puiseux series solution with a non-constant component exists.
\end{theorem}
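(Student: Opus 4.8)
The plan is to reduce the decision problem to the simple systems of a Thomas decomposition and then to first-order autonomous AODEs. First I would compute, by Corollary~\ref{cor:ThomasDim1}, a Thomas decomposition $\sys_1,\ldots,\sys_r$ of~$\sys$ whose members are all of type~\eqref{type1:thomas1dim}, \eqref{type2:thomas1dim} or~\eqref{type3:thomas1dim}. Since the solution sets partition according to~\eqref{eq:partitionsolutions}, $\sys$ admits a formal Puiseux series solution if and only if at least one $\sys_i$ does, and likewise for the existence of a solution with a non-constant component. Hence it suffices to decide each question for a single simple system of each type, which turns the differential problem into finitely many algebraic and first-order problems. (If the decomposition is empty, then $\sys$ is inconsistent and both answers are negative.)

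For the existence of any solution, I would argue as follows. Systems of type~\eqref{type2:thomas1dim} and~\eqref{type3:thomas1dim} are algebraic, and being simple they are consistent; since $\field$ is algebraically closed, the recursive solution procedure recalled in the proof of Proposition~\ref{prop:thomas1dim} produces a constant solution vector in $\field^n$, which is a formal Puiseux series solution. Thus every such system contributes a solution. For a system of type~\eqref{type1:thomas1dim}, I would substitute constants obtained from the lower block $G_1,\ldots,G_{t-1}$ (of which there are finitely many choices, free variables being set to constants) into $G_t$, obtaining a first-order autonomous AODE $H_t(y_t,y_t')=0$ as in the proof of Theorem~\ref{theorem-convergence2}. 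The algorithm of~\cite{cano2019existence} computes all its formal Puiseux series solutions and thereby decides whether one exists; and, exactly as in that proof, once $y_t(x)$ is a Puiseux series the remaining components $y_{t+1}(x),\ldots,y_n(x)$ are obtained as Puiseux roots of the $G_s$ by simplicity and Puiseux's Theorem. So the full existence question is decidable.

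For the second part I would characterize the appearance of a non-constant component. A solution of~$\sys$ has a non-constant component precisely when some simple system in the decomposition (i) carries a \emph{free variable}, which may then be chosen non-constant (e.g.\ equal to $x$); or (ii) is of type~\eqref{type2:thomas1dim}, whose parametric variable $y_t$ may be specialized to a non-constant Puiseux series avoiding the finitely many values excluded by $U \neq 0$; or (iii) is of type~\eqref{type1:thomas1dim} whose reduced equation $H_t(y_t,y_t')=0$ admits a non-constant solution. Conditions (i) and (ii) are read off directly from the Thomas decomposition (cf.\ Remark~\ref{rem-solutionDecomposition}), while condition (iii) is decided again by~\cite{cano2019existence}. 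In all three cases non-constancy propagates to a genuine solution of~$\sys$: the chosen non-constant component feeds into the subsequent equations $G_s=0$, whose Puiseux roots exist by simplicity and Puiseux's Theorem, exactly as in Theorem~\ref{theorem-convergence2}. If none of (i)--(iii) occurs, then every component of every solution is forced to be constant and the answer is negative. Collecting these checks yields the algorithm.

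The main obstacle I anticipate is not conceptual but lies in the careful bookkeeping of the reduction of type~\eqref{type1:thomas1dim} systems: one must treat the finitely many constant specializations of the lower block $y_1,\ldots,y_{t-1}$, keep track of which variables are free or parametric, and verify that a Puiseux series solution of $H_t=0$ always prolongs to the higher components without introducing obstructions. This prolongation, together with the reduction to first-order autonomous equations, rests on simplicity of the system and on Puiseux's Theorem, and the decidability of the remaining first-order question is precisely what is supplied by~\cite{cano2019existence}.
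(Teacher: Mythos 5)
Your proposal is correct and follows essentially the same route as the paper: a Thomas decomposition as in Corollary~\ref{cor:ThomasDim1}, the partition~\eqref{eq:partitionsolutions} of solution sets, constant solutions of the lower triangular block, reduction of type~\eqref{type1:thomas1dim} systems to a first order autonomous AODE treated via~\cite{cano2019existence}, and prolongation of the remaining components using simplicity, Remark~\ref{rem-solutionDecomposition} and the algebraic closedness of the field of formal Puiseux series. The only (harmless) divergence is in the type~\eqref{type1:thomas1dim} case, where the paper invokes the fact that $G_t=0$ admits infinitely many non-constant formal power series solutions, whereas you instead \emph{decide} their existence with the algorithm of~\cite{cano2019existence}; your variant is, if anything, slightly more robust for degenerate equations such as $G_t = y_t'$.
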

\begin{proof}
By Corollary~\ref{cor:ThomasDim1}, there exists a Thomas decomposition of~$\sys$ into simple systems of the type~\eqref{type1:thomas1dim}, \eqref{type2:thomas1dim} or~\eqref{type3:thomas1dim} with the same solution set as~$\sys$. 
The third system has constant solutions. 
There are no other formal Puiseux series solutions if and only if it has no free variables (see Remark~\ref{rem-solutionDecomposition}). 
In systems of type~\eqref{type2:thomas1dim}, for the first $1 \le s < t$ equations there exist constant solutions $\eta_1,\ldots,\eta_{t-1}$. 
Then any non-constant formal Puiseux series $y_t(x)$ fulfills the inequality $U \ne 0$. 
Since the field of formal Puiseux series is algebraically closed, the tuple $(\eta_1,\ldots,\eta_{t-1},y_t(x))$ can be prolonged to a non-constant solution (cf.\ proof of Theorem~\ref{theorem-convergence2}).

In the first $1 \le s < t$ equations of systems of the type~\eqref{type1:thomas1dim}, we again obtain constant solution components $\eta_1,\ldots,\eta_{t-1}$. 
The differential equation $G_t = 0$ has infinitely many non-constant formal power series solutions $y_t(x)$ (see e.g.~\cite{cano2019existence}). 
Take one of them and again prolong $(\eta_1,\ldots,\eta_{t-1},y_t(x))$ to a non-constant solution.

The system~$\sys$ does not have any solution if and only if all subsystems are inconsistent and discarded in the process of computing the Thomas decomposition. 
Moreover,~$\sys$ has only constant solutions if and only if all subsystems in the Thomas decomposition are of type~\eqref{type3:thomas1dim} and do not involve free variables.
\end{proof}

\subsection{Algebraic Solutions}\label{sec-systemalgebraic}
The Thomas decomposition of a given differential system~$\sys$ leads to simple subsystems~$\sys_i$ with a disjoint solution set. 
In the case where~$\sys$ is of algebraic dimension one, the~$\sys_i$ can be assumed to be of a specific type (see Proposition~\ref{prop:thomas1dim}). 
In this section, after some possible further decomposition of the~$\sys_i$ and using the results from Section~\ref{sec-algebraic},
we show that the existence of algebraic solutions can be decided and, in the affirmative case, all of them can be determined. 
The algebraic solutions are expressed either as a simple system (involving no derivatives) or by their minimal polynomials.
In the first case, all of the solutions can be recovered whereas in the latter case there might be combinations of roots which are not a solution of the given system~$\sys$.

\begin{lemma}\label{lem:algsol}
Let $\sys$ be a simple system as in~\eqref{type1:thomas1dim} with $t=1$, 
let $P_1 \in \field(x)[y_1]$ be a polynomial with $\lv(P_1)=y_1$ 
and let $<$ be as in~\eqref{eq:ranking}. 
Then the simple differential systems in any Thomas decomposition of $\sys \cup \{P_1=0\}$ with respect to $<$ are of the following type:
\begin{equation}\tag{IV}\label{type:thomas1alg}
\left\{ \quad
\begin{array}{rcll}
H_s(x,y_1, \ldots, y_s) & = & 0, & \quad s \in \{ 1, \ldots, n \}
\end{array}\right.
\end{equation}
where $H_s \in \field(x)[y_1, \ldots, y_s]$ with either $\lv(H_s) = y_s$ or $H_s$ is the zero-polynomial 
and all other polynomials are independent of $y_s$.
\end{lemma}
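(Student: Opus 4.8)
My plan is to avoid tracking the elimination of $y_1'$ by hand and instead argue through a dimension count. I would view $\sys\cup\{P_1=0\}$ as a differential system over the differential field $\field(x)$ (whose field of constants is $\field$), so that Proposition~\ref{prop:thomas1dim} applies with $\exfield=\field(x)$ and the same ranking~\eqref{eq:ranking}. The first observation is that a system of type~\eqref{type:thomas1alg} is nothing but a system of type~\eqref{type3:thomas1dim} interpreted over $\exfield=\field(x)$: there each $G_s\in\field(x)[y_1,\ldots,y_s]$ is exactly an $H_s(x,y_1,\ldots,y_s)$. By the remark following Proposition~\ref{prop:thomas1dim}, systems of types~\eqref{type1:thomas1dim} and~\eqref{type2:thomas1dim} have algebraic dimension $1$, whereas those of type~\eqref{type3:thomas1dim} have algebraic dimension $0$. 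Hence it suffices to show that every simple differential system $\sys_i$ occurring in a Thomas decomposition of $\sys\cup\{P_1=0\}$ has algebraic dimension $0$; since $0\le 1$, Proposition~\ref{prop:thomas1dim} then classifies $\sys_i$ among the three types, and dimension $0$ rules out~\eqref{type1:thomas1dim} and~\eqref{type2:thomas1dim}, leaving type~\eqref{type3:thomas1dim}, i.e.\ type~\eqref{type:thomas1alg}.

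The crux I would establish is that $\Sol_{\field\Pu}(\sys\cup\{P_1=0\})$ is a finite set. Indeed, $\sys$ is of type~\eqref{type1:thomas1dim} with $t=1$, so its one-dimensionality sits entirely in the indeterminate $y_1$, which is the leading variable of no equation of $\sys$. Adjoining $P_1$, with $\lv(P_1)=y_1$, forces every solution of $\sys\cup\{P_1=0\}$ to have $y_1$ equal to one of the finitely many roots of $P_1$ in $\field\Pu$ (algebraic functions over $\field(x)$). Then $y_1'$ is the derivative of this fixed series, and by simplicity of $\sys$ the equations $G_2,\ldots,G_n$ (with leading variables $y_2,\ldots,y_n$ and initials non-vanishing on the solution set) determine each of $y_2,\ldots,y_n$ up to finitely many possibilities. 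Consequently only finitely many solution vectors arise.

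Then, given an arbitrary Thomas decomposition $\{\sys_i\}$ of $\sys\cup\{P_1=0\}$, I would invoke the partition property~\eqref{eq:partitionsolutions} over the differential extension $\field\Pu$ of $\field(x)$: each $\Sol_{\field\Pu}(\sys_i)$ is contained in the finite set $\Sol_{\field\Pu}(\sys\cup\{P_1=0\})$, hence is itself finite. On the other hand, a simple differential system of positive algebraic dimension has infinitely many formal Puiseux series solutions, since by formal integrability a jet coordinate that is the leading variable of no equation may be assigned infinitely many values and extended to a solution (concretely, type~\eqref{type1:thomas1dim} carries a one-parameter family of solutions of $G_t=0$ and type~\eqref{type2:thomas1dim} a parametric variable, cf.\ the proof of Theorem~\ref{thm:existence}). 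Therefore each $\sys_i$ has algebraic dimension $0$, and by the reduction in the first paragraph it is of type~\eqref{type:thomas1alg}.

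The step I expect to be most delicate is the finiteness of $\Sol_{\field\Pu}(\sys\cup\{P_1=0\})$, together with the passage from this finiteness to algebraic dimension $0$ on \emph{every} piece of \emph{any} decomposition. This is precisely where simplicity of $\sys$ enters, to guarantee that $G_2,\ldots,G_n$ determine $y_2,\ldots,y_n$ only finitely so that no component escapes into a free variable, and where the formal integrability of simple systems is used, to turn positive algebraic dimension into infinitely many Puiseux solutions. The remaining implications are routine applications of Proposition~\ref{prop:thomas1dim} and of the identification of type~\eqref{type:thomas1alg} with type~\eqref{type3:thomas1dim} over $\field(x)$.
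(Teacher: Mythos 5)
Your overall scheme---reduce type~\eqref{type:thomas1alg} to type~\eqref{type3:thomas1dim} over $\exfield=\field(x)$ and then force dimension zero on every piece by counting Puiseux solutions---has two genuine gaps. The first is your finiteness claim for $\Sol_{\field\Pu}(\sys\cup\{P_1=0\})$: in a system of type~\eqref{type1:thomas1dim} the polynomial $G_s$ is explicitly allowed to be the zero polynomial, in which case $y_s$ is a free variable (cf.\ Remark~\ref{rem-solutionDecomposition}) and may be \emph{any} element of $\field\Pu$, so the solution set is infinite. Your assertion that ``simplicity of $\sys$ enters \ldots so that no component escapes into a free variable'' contradicts the statement of Proposition~\ref{prop:thomas1dim}. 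This gap is repairable, since free variables occur in no equation, persist untouched through the decomposition, and correspond exactly to the zero-polynomial case of~\eqref{type:thomas1alg}; one can argue on the occurring variables only.

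The second gap is fatal to the argument as designed: the bridge ``positive algebraic dimension $\Rightarrow$ infinitely many formal Puiseux series solutions'' is not available for the pieces of this decomposition. The system $\sys\cup\{P_1=0\}$ is no longer autonomous ($P_1\in\field(x)[y_1]$), the pieces have coefficients in $\field(x)$, and by the paper's convention solutions are expansions around $0$ or $\infty$ only; the results you invoke (the proof of Theorem~\ref{thm:existence}, resp.~\cite{cano2019existence}) concern autonomous equations with constant coefficients and give you nothing here. Over $\field(x)$ the implication is false: $\{x\,y_1'-\alpha\,y_1=0\}$ with $\alpha\in\field\setminus\Q$ is a simple system of type~\eqref{type1:thomas1dim} of algebraic dimension one whose only formal Puiseux series solution around $0$ (or around $\infty$) is $y_1=0$, because $c\,x^{\alpha}$ is not a Puiseux series for $c\neq 0$. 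Hence finiteness of $\Sol_{\field\Pu}(\sys_i)$ does not force algebraic dimension zero, and your classification of the pieces collapses. (A related soft spot: to apply Proposition~\ref{prop:thomas1dim} to a piece you must already know its dimension is at most one, but Theorem~\ref{thm:ThomasDimension} only provides \emph{some} decomposition with controlled dimension, whereas the lemma quantifies over \emph{any} decomposition.) The paper avoids solution counting entirely and argues on the dimension directly: since $\lv(P_1)=y_1$ is the lowest-ranked variable and $P_1$ is already completely reduced modulo $\sys$, the system $\sys\cup\{P_1=0\}$ is inconsistent or of algebraic dimension zero; in the construction of the decomposition the only derivative $y_1'$ is eliminated by reducing $G_1$ modulo $P_1'$ (which has degree one in $y_1'$), after which the computation is purely algebraic, dimension zero passes to every piece, and in a zero-dimensional simple system every occurring variable is the leading variable of exactly one equation---which is precisely type~\eqref{type:thomas1alg}. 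Replacing your counting step by this direct dimension argument is the correct fix.
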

\begin{proof}
Since $\lv(P_1)=y_1$ is the lowest ranked variable and $P_1$ is already completely reduced modulo~$\sys$, the system $\sys \cup \{P_1=0\}$ is either inconsistent or has algebraic dimension zero. 
From now on we assume the latter.

Let $\sys_1,\ldots,\sys_r$ be the simple subsystems of a Thomas decomposition of $\sys \cup \{P_1=0\}$ with respect to $<$. 
During the construction of the Thomas decomposition, the polynomial $G_1$ from~\eqref{type1:thomas1dim} was reduced by $P_1'$, which has degree one in $y_1'$, and the derivative is eliminated.

Fix $i \in \{1,\ldots,r\}$ and let $\{z_1,\ldots,z_q\}$ be the minimal subset of $\{y_1,\ldots,y_n\}$ such that the elements in~$\sys_i$ are in $\field(x)[z_1,\ldots,z_q]$. 
Since the algebraic dimension of~$\sys_i$ is zero, for every $s \in \{1,\ldots,q\}$ there exists a unique equation $H_s=0$ with $H_s \in \field(x)[z_1,\ldots,z_s]$ and $\lv(H_s)=z_s$. 
Since the set of leading variables of~$\sys_i$ is $\{z_1,\ldots,z_q\}$, the system contains only (algebraic) equations and~$\sys_i$ is of the type~\eqref{type:thomas1alg}.
\end{proof}

Let us note that in Lemma~\ref{lem:algsol} we do not necessarily choose $P_1$ to be irreducible. 

\begin{corollary}\label{cor:simplesolutionsarealgebraic}
Let $\sys$ be a differential system of the type~\eqref{type1:thomas1dim} and~$\mathcal{H}$ be an algebraic subsystem of~$\sys$ of the form~\eqref{type:thomas1alg}. 
Let $Y(x)=(y_1(x),\ldots,y_n(x)) \in \field\Pu^n$ be a solution of~$\mathcal{H}$. 
Then, for every $1 \le s \le n$, the component $y_s(x)$ is algebraic or $y_s$ is a free variable.
\end{corollary}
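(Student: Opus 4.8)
The plan is to argue by induction on $s \in \{1,\ldots,n\}$, exploiting the triangular shape of $\mathcal{H}$ together with the transitivity of algebraic field extensions. The crucial structural observation is that, by the definition of type~\eqref{type:thomas1alg}, whenever some $y_j$ is a free variable (that is, $H_j$ is the zero polynomial), every other polynomial $H_s$ is independent of $y_j$. Consequently, each nonzero equation $H_s = 0$ involves only $x$ and the non-free components among $y_1, \ldots, y_s$, so the arbitrary, and possibly transcendental, choices made for the free components never enter the equations determining the remaining components.

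For the induction step, if $H_s$ is the zero polynomial then $y_s$ is by definition a free variable and there is nothing to prove. Otherwise $\lv(H_s) = y_s$, so $H_s$ has positive degree in $y_s$ and, by the observation above, lies in $\field(x)[y_{j_1}, \ldots, y_{j_r}, y_s]$ for the non-free indices $j_1 < \cdots < j_r < s$. By the induction hypothesis each of $y_{j_1}(x), \ldots, y_{j_r}(x)$ is algebraic over $\field(x)$, so together they generate a finite algebraic extension $\mathbb{F}$ of $\field(x)$ inside $\field\Pu$. Substituting these components into $H_s$ yields a univariate polynomial $\tilde{H}_s(y_s) = H_s(x, y_{j_1}(x), \ldots, y_{j_r}(x), y_s)$ with coefficients in $\mathbb{F}$, of which $y_s(x)$ is a root.

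What remains, and what I expect to be the only delicate point, is to certify that $\tilde{H}_s$ is not the zero polynomial, for otherwise it would impose no algebraic relation on $y_s(x)$. This follows from the simplicity of $\mathcal{H}$: by Definition~\ref{de:algsimple} the initial of $H_s$ with respect to $\lv(H_s) = y_s$ does not vanish at any point of the solution set, hence $\init(H_s)$ evaluated at $(x, y_{j_1}(x), \ldots, y_{j_r}(x))$ is a nonzero element of $\mathbb{F}$. Therefore $\tilde{H}_s$ has positive degree in $y_s$, so $y_s(x)$ is algebraic over $\mathbb{F}$. Since $\mathbb{F}$ is itself algebraic over $\field(x)$, transitivity of algebraicity shows that $y_s(x)$ is algebraic over $\field(x)$, which completes the induction and hence the proof.
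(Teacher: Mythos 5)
Your proof is correct and is essentially the paper's argument in expanded form: the paper substitutes zero for the free components and then observes in one line that, since $\mathcal{H}$ is simple and of algebraic dimension zero, the tower $\field(x) \subseteq \field(x)(y_1(x)) \subseteq \cdots$ is algebraic, which is exactly what your induction establishes step by step. Your explicit verification that the evaluated polynomial $\tilde{H}_s$ has positive degree in $y_s$ — via the nonvanishing of $\init(H_s)$ on the solution set, legitimate here because $\field\Pu$ is algebraically closed and the partial solution lies in the relevant projection of $\Va_{\field\Pu}(\mathcal{H})$ — is precisely the content the paper leaves implicit in the word ``simple''.
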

\begin{proof}
Replace in $Y(x)$ all the components which are not appearing in $\mathcal{H}$ by zero and call the resulting vector $\tilde{Y}(x)=(\tilde{y}_1(x),\ldots,\tilde{y}_n(x))$, which is a solution of $\mathcal{H}$, and therefore of~$\sys$. 
Since $\mathcal{H}$ is simple and of algebraic dimension zero, the tower of field extensions $$\field \subseteq \field(\tilde{y}_1(x)) \subseteq \cdots \subseteq \field(\tilde{y}_1(x),\ldots,\tilde{y}_n(x))$$ is algebraic.
\end{proof}

Due to Corollary~\ref{cor:simplesolutionsarealgebraic}, we may speak in the following about algebraic solutions instead of formal Puiseux series solutions where all components are algebraic. 
Let us now show that all of the algebraic solutions can be found algorithmically.

\begin{lemma}\label{lem:algebraicsolutions}
Let $Y(x)=(y_1(x),\ldots,y_n(x)) \in \field\Pu^n$ be an algebraic solution of a system~$\sys$ as in~\eqref{type1:thomas1dim}. 
Then there exists a simple algebraic subsystem~$\mathcal{H}$ of~$\sys$, of the form~\eqref{type:thomas1alg}, having~$Y(x)$ as solution.
\end{lemma}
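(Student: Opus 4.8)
The plan is to exploit that the distinguished component $y_t(x)$ is algebraic over $\field(x)$, adjoin its minimal polynomial to $\sys$, and then invoke Lemma~\ref{lem:algsol} on the enlarged system.

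First I would reduce to the case $t=1$ as in Remark~\ref{rem-solutionDecomposition}. The components $y_1(x),\ldots,y_{t-1}(x)$ of the given solution satisfy the purely algebraic equations $G_s(y_1,\ldots,y_s)=0$, $s<t$, with constant coefficients, so each $y_s(x)$ is either a constant $\eta_s \in \field$ or corresponds to a free variable, which by Proposition~\ref{prop:thomas1dim} occurs in no other equation of $\sys$. Substituting the constants $\eta_s$ into $\sys$ and setting the free components aside yields a system $\hat\sys$ of type~\eqref{type1:thomas1dim} over $\field$ in the variables $y_t,\ldots,y_n$ whose lowest ranked derivative is $y_t'$; after relabelling we may assume $t=1$, and $\hat Y(x)=(y_t(x),\ldots,y_n(x))$ is a solution of $\hat\sys$.

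Next, since $Y(x)$ is algebraic, $y_t(x)$ is algebraic over $\field(x)$; let $P_1 \in \field(x)[y_t]$ be its minimal polynomial, so $\lv(P_1)=y_t$ and $P_1(x,y_t(x))=0$. Then $\hat Y(x)$ solves $\hat\sys \cup \{P_1=0\}$. I would compute a Thomas decomposition of $\hat\sys \cup \{P_1=0\}$ with respect to $<$; by Lemma~\ref{lem:algsol} every simple system in it is of type~\eqref{type:thomas1alg}. By the partition of solution sets~\eqref{eq:partitionsolutions}, $\hat Y(x)$ is a solution of exactly one of these simple systems, say $\mathcal{H}'$, which is therefore of type~\eqref{type:thomas1alg} and satisfies $\Sol_{\field\Pu}(\mathcal{H}') \subseteq \Sol_{\field\Pu}(\hat\sys \cup \{P_1=0\}) \subseteq \Sol_{\field\Pu}(\hat\sys)$.

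Finally I would lift $\mathcal{H}'$ back to the original variables by adjoining the equations $y_s - \eta_s = 0$ (with leading variable $y_s$) for the constant components $s<t$ and taking $H_s$ to be the zero polynomial for each free variable. The resulting system $\mathcal{H}$ is simple, of type~\eqref{type:thomas1alg}, and has $Y(x)$ as a solution; moreover any solution of $\mathcal{H}$ has $y_s=\eta_s$ for $s<t$ and its remaining part solves $\mathcal{H}'$, hence $\hat\sys$ and therefore $\sys$, so that $\mathcal{H}$ is indeed a subsystem of $\sys$. The main obstacle is the bookkeeping in the reduction to $t=1$: one must verify that substituting the constants $\eta_s$ preserves the shape~\eqref{type1:thomas1dim} and the simplicity required by Lemma~\ref{lem:algsol}, using that the initials and discriminants of $\sys$ do not vanish at the solution point $(\eta_1,\ldots,\eta_{t-1})$, and that the inequation $U\neq 0$ is carried through correctly so that the containment $\Sol_{\field\Pu}(\mathcal{H})\subseteq\Sol_{\field\Pu}(\sys)$ holds.
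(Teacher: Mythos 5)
Your proposal is correct and follows essentially the same route as the paper: reduce to $t=1$ via Remark~\ref{rem-solutionDecomposition}, adjoin the minimal polynomial of the first non-constant (algebraic) component, and apply Lemma~\ref{lem:algsol} to the enlarged system so that one simple system of type~\eqref{type:thomas1alg} in the resulting Thomas decomposition contains $Y(x)$ by the partition property~\eqref{eq:partitionsolutions}. You merely make explicit the bookkeeping (substituting the constants $\eta_s$, handling free variables, and lifting back) that the paper delegates to Remark~\ref{rem-solutionDecomposition}, and you harmlessly omit the paper's remark about passing to a factor $F_1$ of $G_1$, which is not needed for this existence statement since Lemma~\ref{lem:algsol} does not require $P_1$ to be irreducible.
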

\begin{proof}
Let us assume that $t=1$ (cf.\ Remark~\ref{rem-solutionDecomposition}).
The algebraic Puiseux series $y_1(x)$ is a solution of the differential equation $G_1(y,y')=0$ from $\sys$. 
There is a factor $F_1(y,y')$ of the polynomial $G_1$ such that $y_1(x)$ is a solution of the differential equation $F_1(y_1,y_1')=0$. 
Let $Q_1(x,y_1) \in \field[x,y_1]$ be the minimal polynomial of $y_1(x)$. 
Let $\mathcal{H}$ be the algebraic system obtained when 
applying Lemma~\ref{lem:algsol} to $\sys \cup \{Q_1=0\}$. 
Then, $\mathcal{H}$ is among the subsystems derived from the Thomas decomposition of $\sys \cup \{Q_1=0\}$, 
and hence $\mathcal{H}$ is a simple algebraic subsystem of~$\sys$ of the type~\eqref{type:thomas1alg} having $Y(x)$ as solution.
\end{proof}

For details regarding the computation of $Q_1$ figuring in the previous proof we refer to Section~\ref{sec-algebraic}.

\begin{lemma}\label{lem:allalgebraicsolutions}
Let $\mathcal{H}=\{H_1(x,y_1),\ldots,H_n(x,y_1,\ldots,y_n)\}$ be a simple algebraic subsystem of a differential system~$\sys$ of type~\eqref{type1:thomas1dim}. 
Then, for $c \in \field$, every solution of the system~$\mathcal{H}(c)$, defined by
\begin{equation*}
\left\{ \quad
\bar H_s(x,y_1,\ldots,y_s) = H_s(x+c,y_1, \ldots, y_s) = 0, \quad s \in \{ 1, \ldots, n \} \right.,
\end{equation*}
is an algebraic solution of~$\sys$.
\end{lemma}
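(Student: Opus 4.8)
The plan is to exploit the single structural fact that makes the statement work: although the polynomials $H_s$ of the type~\eqref{type:thomas1alg} system $\mathcal{H}$ genuinely involve $x$ as a coefficient variable, the ambient system~$\sys$ of type~\eqref{type1:thomas1dim} is \emph{autonomous}, so its solution set is invariant under the translation $x \mapsto x+c$. The shift $H_s(x+c,\cdot)$ defining $\mathcal{H}(c)$ is designed precisely to interact with this invariance. Concretely, let $Y(x)=(y_1(x),\ldots,y_n(x)) \in \field\Pu^n$ be a solution of $\mathcal{H}(c)$, i.e.\ $H_s(x+c,y_1(x),\ldots,y_s(x))=0$ for every $s \in \{1,\ldots,n\}$. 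First I would introduce the back-shifted vector $Z(x):=(y_1(x-c),\ldots,y_n(x-c))$ and check that it solves the unshifted system $\mathcal{H}$: since each relation $H_s(x+c,y_1(x),\ldots,y_s(x))=0$ holds identically, substituting $x \mapsto x-c$ yields $H_s(x,y_1(x-c),\ldots,y_s(x-c))=0$, that is, $H_s(x,Z_1(x),\ldots,Z_s(x))=0$ for all $s$.

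Next I would invoke that $\mathcal{H}$ is a simple algebraic subsystem of~$\sys$ in the sense of Lemma~\ref{lem:algebraicsolutions}: it arises from a Thomas decomposition of~$\sys$ together with the minimal polynomial $Q_1$, and hence every solution of $\mathcal{H}$ is a solution of~$\sys$. In particular $Z$ solves~$\sys$, and by Corollary~\ref{cor:simplesolutionsarealgebraic} each component $y_s(x-c)$ is algebraic over $\field(x)$ (free-variable components being chosen algebraically). It then remains to transfer this back to $Y$, and this is exactly where autonomy enters. Because the $G_s$ and $U$ in~\eqref{type1:thomas1dim} carry no explicit $x$, and because differentiation commutes with the translation $x \mapsto x+c$, the fact that $Z(x)$ satisfies every equation $G_s=0$ and the inequation $U \neq 0$ of~$\sys$ forces $Z(x+c)=Y(x)$ to satisfy them as well. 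Thus $Y$ is a solution of~$\sys$, and the inequation $U \neq 0$ is preserved under the shift for the same reason, so nothing is lost.

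Finally, for algebraicity I would argue that if $y_s(x-c)$ has minimal polynomial $M_s(x,y_s) \in \field[x,y_s]$ over $\field(x)$, then $y_s(x)$ satisfies $M_s(x+c,y_s(x))=0$, so $y_s(x)$ is again algebraic over $\field(x)$; hence $Y$ is an algebraic solution of~$\sys$. The conceptual crux—and the only point needing care—is to keep the two roles of $x$ apart: $x$ is a bona fide coefficient variable in the type~\eqref{type:thomas1alg} system $\mathcal{H}$, which is \emph{not} translation-invariant, whereas $\sys$ is autonomous, and it is this autonomy of $\sys$ alone that licenses shifting a solution of $\mathcal{H}(c)$ back into a solution of $\sys$. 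No deeper obstacle arises; the proof is essentially the bookkeeping of this one substitution together with the already established inclusion of solution sets.
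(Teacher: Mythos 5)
Your proof is correct, but it takes a genuinely different route from the paper's. You argue entirely at the level of solutions: back-shift a solution $Y(x)$ of $\mathcal{H}(c)$ to $Z(x)=Y(x-c)$, observe that $Z$ solves $\mathcal{H}$, invoke the inclusion $\Sol(\mathcal{H})\subseteq\Sol(\sys\cup\{P_1=0\})\subseteq\Sol(\sys)$ coming from the partition property~\eqref{eq:partitionsolutions} of the Thomas decomposition, and then use autonomy of~$\sys$ (constant coefficients, differentiation commuting with translation) to shift forward again, so that $Y=Z(\cdot+c)$ solves~$\sys$, with algebraicity transferred via $M_s(x+c,y_s)$. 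The paper argues instead at the level of the computation: it records the three reduction steps producing $\mathcal{H}$ from $\sys\cup\{P_1=0\}$ (elimination of $y_1'$ in $G_1$ by $P_1'$, the gcd $T_1$ of the resulting $R_1$ with $P_1$, reduction of $G_2,\ldots,G_n$ by $T_1$) and checks, via the chain rule and by treating $x+c$ as a new constant, that each step commutes with the substitution $x\mapsto x+c$; hence $\mathcal{H}(c)$ is itself a simple system in a Thomas decomposition of $\sys\cup\{P_1(x+c,y_1)=0\}$, so its solutions solve~$\sys$, while algebraicity follows from Corollary~\ref{cor:simplesolutionsarealgebraic} applied directly to the type~\eqref{type:thomas1alg} system $\mathcal{H}(c)$ — a shortcut you could also have used, sparing your detour through $M_s$. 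Your argument is shorter and avoids the square-freeness assumption and the gcd bookkeeping; what the paper's proof buys is a strictly stronger by-product, namely the shift-equivariance of the decomposition itself (the simple systems of the shifted input are exactly the shifted systems $\mathcal{H}_k(c)$), and it is this stronger fact, not merely the lemma's stated conclusion, that the proof of Theorem~\ref{thm:algSolDecomposition} later invokes when it identifies the simple algebraic systems of $\bar{\sys}\cup\{Q_t(x+c,y_1)=0\}$ with the $\mathcal{H}_k(-c)$. One point you should make explicit: the back-shifted components $y_s(x-c)$ are formal Puiseux series expanded around $x_0=c$ rather than around the origin (for divergent series the recentering is not an operation within the Puiseux field at $0$); this is harmless, since the paper admits expansion around any $x_0$ and $f(x)\mapsto f(x-c)$ is a differential field isomorphism onto the Puiseux field at $c$ (and an automorphism of the field at infinity), so all your identities and the partition property can be read in that extension field — but as written this is a gap in rigor, not in substance.
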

\begin{proof}
The system~$\mathcal{H}(c)$ is of the type~\eqref{type:thomas1alg}. 
By Corollary~\ref{cor:simplesolutionsarealgebraic}, its solutions are algebraic. 
We need to show that the solutions of $\mathcal{H}(c)$ are solutions of~$\sys$.
Let $P_1 \in \field(x)[y_1]$ be the polynomial used for obtaining $\mathcal{H}$ (see Lemma~\ref{lem:algsol}). 
We may assume that $P_1$ is square-free. 
Since $y_1$ is the lowest ranked variable, the initial of $P_1$ does not vanish on the solution set of~$\sys$. 
Define $\bar P_1(x,y_1) = P_1(x+c,y_1)$ where $c$ is a new variable with $c'=0$. 
The reduction steps in the process of obtaining~$\mathcal{H}$ in a Thomas decomposition of~$\sys \cup \{P_1 = 0 \}$ are as follows:
\begin{enumerate}
	\item Eliminate $y_1'$ in $G_1(y_1,y_1')$ by $P_1'$ resulting into the polynomial $R_1(x,y_1)$.
	\item Compute the greatest common divisor of $R_1$ and $P_1$, say $T_1$.
	\item Possibly reduce $G_2,\ldots,G_n$ by $T_1$.
\end{enumerate}
The chain rule implies that elimination of $y_1'$ in $G_1(y_1,y_1')$ by $\bar P_1'$ yields $R_1(x+c,y_1)$. 
Similarly, as $x+c$ can be considered as a new variable, the greatest common divisor of $R_1(x+c,y_1)$ and $\bar P_1$ is $T_1(x+c,y_1)$. 
Hence, this process results in the simple system~$\mathcal{H}(c)$ which can be obtained from~$\mathcal{H}$ by substituting $x$ by $x+c$.
\end{proof}

\begin{theorem}\label{thm:algSolDecomposition}
Let $\sys$ be a differential system as in~\eqref{prelim_EQ-AODESystem}, of algebraic dimension one. 
Then there exist a finite number of simple algebraic subsystems~$\mathcal{H}_k$ of~$\sys$ such that any algebraic solution of~$\sys$ is among the solutions of some~$\mathcal{H}_k(c)$, $c \in K$, which are defined as in Lemma~\ref{lem:allalgebraicsolutions}.
\end{theorem}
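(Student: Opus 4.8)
The plan is to reduce the general statement to the structured simple systems supplied by Corollary~\ref{cor:ThomasDim1} and then assemble the finitely many algebraic subsystems via the machinery already developed in Lemmas~\ref{lem:algsol}, \ref{lem:algebraicsolutions} and~\ref{lem:allalgebraicsolutions}. First I would invoke Corollary~\ref{cor:ThomasDim1} to obtain a Thomas decomposition of~$\sys$ into finitely many simple differential systems $\sys_1,\ldots,\sys_r$, each of type~\eqref{type1:thomas1dim}, \eqref{type2:thomas1dim} or~\eqref{type3:thomas1dim}. Since the solution set of~$\sys$ is partitioned among the $\Sol_{\field\Pu}(\sys_i)$ (cf.~\eqref{eq:partitionsolutions}), every algebraic solution of~$\sys$ is an algebraic solution of exactly one~$\sys_i$, so it suffices to treat each type separately and take the union of the resulting lists of subsystems.

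For the systems of type~\eqref{type3:thomas1dim} (algebraic dimension zero) and type~\eqref{type2:thomas1dim}, the situation is essentially immediate: a type-\eqref{type3:thomas1dim} system already has the form~\eqref{type:thomas1alg} (up to adjoining the free variables as zero, or equivalently viewing them as trivial components), so it may itself serve as an~$\mathcal{H}_k$ with $c=0$, and its finitely many solutions are algebraic by Corollary~\ref{cor:simplesolutionsarealgebraic}. For type~\eqref{type2:thomas1dim}, the parametric variable~$y_t$ ranges over~$\field$ with finitely many exceptions; as explained in Remark~\ref{rem-solutionDecomposition}, an algebraic solution arises by substituting~$y_t$ with an algebraic function (in particular a constant or a fixed algebraic choice) and prolonging, after which each subsequent equation~$G_s=0$ has $\lv(G_s)=y_s$ and forces~$y_s(x)$ to be algebraic; these again assemble into systems of the form~\eqref{type:thomas1alg}.

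The substantive case is type~\eqref{type1:thomas1dim}, and here I would lean directly on the preceding lemmas. By Remark~\ref{rem-solutionDecomposition} we may assume $t=1$, so the distinguished equation is the first order autonomous AODE $G_1(y_1,y_1')=0$. Given any algebraic solution $Y(x)$ of such a system, Lemma~\ref{lem:algebraicsolutions} produces a simple algebraic subsystem~$\mathcal{H}$ of~$\sys$ of the form~\eqref{type:thomas1alg} having $Y(x)$ as a solution, where~$\mathcal{H}$ is obtained from a Thomas decomposition of $\sys\cup\{Q_1=0\}$ with $Q_1$ the minimal polynomial of~$y_1(x)$. Conversely, Lemma~\ref{lem:allalgebraicsolutions} guarantees that for any $c\in\field$ the shifted system~$\mathcal{H}(c)$ has only algebraic solutions, all of which solve~$\sys$. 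The point of this paragraph is to argue that \emph{finitely many} such~$\mathcal{H}$ suffice to capture \emph{all} algebraic solutions, despite the continuum of possible solutions~$Y(x)$ and associated minimal polynomials~$Q_1$.

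The main obstacle, then, is this finiteness. The key is Theorem~\ref{THM:AllMinimalPolynomial}: for each irreducible factor~$F_1$ of~$G_1$ that admits a non-constant algebraic solution, all non-constant algebraic solutions of $F_1(y_1,y_1')=0$ share a single minimal polynomial up to the shift $Q(x+c,y)$. Hence the minimal polynomials~$Q_1$ arising from all algebraic solutions of type-\eqref{type1:thomas1dim} systems fall into finitely many classes---one per irreducible factor of~$G_1$, together with the finitely many constant roots of~$G_1(y_1,0)$---each class being a single polynomial up to an $x$-translation. I would therefore, for each such representative~$Q_1$, apply Lemma~\ref{lem:algebraicsolutions} once to obtain a single subsystem~$\mathcal{H}_k$; Lemma~\ref{lem:allalgebraicsolutions} shows that the translation $x\mapsto x+c$ in~$Q_1$ propagates to the translation $\mathcal{H}_k\mapsto\mathcal{H}_k(c)$, so the continuum of translated solutions is recovered precisely by ranging~$c$ over~$\field$ in~$\mathcal{H}_k(c)$. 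Collecting the finitely many~$\mathcal{H}_k$ across all three types and all subsystems~$\sys_i$ yields the desired finite list, and every algebraic solution of~$\sys$ lies among the solutions of some~$\mathcal{H}_k(c)$.
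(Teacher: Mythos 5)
Your proposal is correct and follows essentially the same route as the paper's proof: a Thomas decomposition via Corollary~\ref{cor:ThomasDim1}, direct inclusion of the type~\eqref{type2:thomas1dim} and~\eqref{type3:thomas1dim} systems, reduction to $t=1$ with splitting of $G_1$ into irreducible factors plus its constant solutions, and the combination of Theorem~\ref{THM:AllMinimalPolynomial} with Lemmas~\ref{lem:algebraicsolutions} and~\ref{lem:allalgebraicsolutions} to obtain finiteness of the $\mathcal{H}_k$ and recover all algebraic solutions through the shifted families $\mathcal{H}_k(c)$. The only cosmetic difference is that you elaborate the type~\eqref{type2:thomas1dim} case, where the paper simply adds those systems to the output list.
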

\begin{proof}
Let us first construct a set of simple algebraic subsystems $\mathcal{H}_k$ of~$\sys$. 
By Corollary~\ref{cor:ThomasDim1}, $\sys$ can be decomposed into simple systems of the form~\eqref{type1:thomas1dim},\eqref{type2:thomas1dim} and~\eqref{type3:thomas1dim}. 
Systems of the second and third type are already algebraic systems and we add them.

Let us consider systems~$\bar{\sys}$ of the type~\eqref{type1:thomas1dim} with $t=1$. 
By possibly further splitting systems according to the factorization of $G_1(y_1,y_1')$, we may additionally assume that $G_1$ in~$\bar\sys$ is irreducible. 
The differential equation $G_1=0$ might have a non-constant algebraic solution, given by its minimal polynomial $Q_1 \in K[x,y_1]$, or does not have any. 
In the first case, let $\mathcal{H}_k$ be the simple algebraic subsystems obtained by a Thomas decomposition of $\bar\sys \cup \{Q_1 = 0\}$.
In the latter case, we discard $\bar\sys$. 
Additionally, compute the constant solutions $y_1(x)=c$ of $G_1$ and let $\mathcal{H}_k$ be the simple algebraic subsystems obtained by a Thomas decomposition of $\bar\sys \cup \{ y_1-c=0 \}$.

Let~$\bar{\sys}$ be of the type~\eqref{type1:thomas1dim} with $t>1$. 
For $1 \le s<t$, compute the constant solutions $y_s(x)=c_s$ and plug them into~$\bar{\sys}$. 
Now proceed with the differential equation $G_t(y_t,y_t') = 0$ as above in order to obtain a non-trivial minimal polynomial $Q_t(x,y_t)$ of a non-constant algebraic solution, if it exists, and the corresponding simple algebraic subsystems $\mathcal{H}_k$. 

Let $Y(x)=(y_1(x),\ldots,y_n(x))$ be an algebraic solution of~$\sys$ and hence, of a subsystem $\bar\sys$. 
If all components of $Y(x)$ are constant, then it is a constant solution of some system~$\bar{\sys}$ and all of them are kept in~$\mathcal{H}_k$.
So let $y_t(x)$ be the first non-constant component. 
Then, by Theorem~\ref{THM:AllMinimalPolynomial}, there exists $c \in K$ such that $Q_t(x+c,y_t(x))=0$ for the minimal polynomial $Q_t$ corresponding to~$\bar\sys$. 
By Lemma~\ref{lem:allalgebraicsolutions}, the simple algebraic systems of $\bar{\sys} \cup \{ Q_t(x+c,y_1) = 0 \}$ are $\mathcal{H}_k(-c)$ and exactly one of them has $Y(x)$ as solution.
\end{proof}

Based on Corollary~\ref{cor:simplesolutionsarealgebraic}, an algebraic solution $(y_1(x),\ldots,y_n(x))$ of a given system~$\sys$ as in~\eqref{type1:thomas1dim} could be alternatively represented by a list of minimal polynomials $(Q_1(x,y_1),\ldots,Q_n(x,y_n))$, called a \textit{minimal polynomial system} of~$\sys$. 
The following proposition gives a degree bound on the minimal polynomials.

\begin{proposition}\label{prop:minpolynomialsystem}
Let $\sys$ be a differential system as in~\eqref{type1:thomas1dim} with an algebraic solution $(y_1(x),\ldots,y_n(x))$. 
Then there exists a minimal polynomial system $(Q_1(x,y_1),\ldots,Q_n(x,y_n))$ of~$\sys$. 
Moreover, every $Q_s \in \field[x,y_s]$ fulfills
\begin{equation}\label{eq-degBound}
\deg_{y_s}(Q_s) \leq (\deg_{y_1}(G_{1})+\deg_{y_1'}(G_{1}))\,\deg_{y_2}(G_{2})\cdots \deg_{y_n}(G_{n}).
\end{equation}
\end{proposition}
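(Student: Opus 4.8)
The plan is to exploit that an algebraic solution generates a tower of finite extensions of $\field(x)$ whose total degree is bounded by the right-hand side of~\eqref{eq-degBound}. By Remark~\ref{rem-solutionDecomposition} we may assume $t=1$, so that $\lv(G_1)=y_1'$ and $\lv(G_s)=y_s$ for $s\in\{2,\ldots,n\}$. Since $(y_1(x),\ldots,y_n(x))$ is an algebraic solution, each $y_s(x)$ is algebraic over $\field(x)$ and hence has a minimal polynomial $Q_s\in\field[x,y_s]$; the list $(Q_1,\ldots,Q_n)$ is the desired minimal polynomial system, which settles existence. Setting $L_0=\field(x)$ and $L_s=\field(x)(y_1(x),\ldots,y_s(x))$, I would observe that $\deg_{y_s}(Q_s)=[\field(x)(y_s(x)):\field(x)]$ is at most $[L_n:\field(x)]=\prod_{s=1}^n[L_s:L_{s-1}]$, so it suffices to bound every factor of this tower.

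For $s\ge 2$ the bound is routine. Because $y_1(x)$ is algebraic, its derivative $y_1'(x)=-\partial_x Q_1/\partial_{y_1}Q_1$ evaluated at $(x,y_1(x))$ (with $\partial_{y_1}Q_1(x,y_1(x))\neq 0$ by separability in characteristic zero) already lies in $L_1$. Hence $G_s(y_1(x),y_1'(x),y_2(x),\ldots,y_{s-1}(x),y_s)$ is a univariate polynomial over $L_{s-1}$ having $y_s(x)$ as a root, and by simplicity of $\sys$ its initial does not vanish, so its degree in $y_s$ equals $\deg_{y_s}(G_s)$; therefore $[L_s:L_{s-1}]\le\deg_{y_s}(G_s)$. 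It remains to bound $[L_1:L_0]=\deg_{y_1}(Q_1)$. Passing to the irreducible factor $F_1$ of $G_1$ with $F_1(y_1(x),y_1'(x))=0$ (the case of constant $y_1(x)$ being trivial, since then $\deg_{y_1}(Q_1)=1\le\deg_{y_1'}(G_1)$), it is enough to prove $\deg_{y_1}(Q_1)\le\deg_{y_1}(F_1)+\deg_{y_1'}(F_1)$.

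I expect this last inequality to be the main obstacle, and I would establish it by a valuation-theoretic argument on the function field of the curve $C:F_1(y_1,y_1')=0$. By the result of~\cite{aroca2005algebraic} recalled in the proof of Lemma~\ref{lemma:AllMinimalPolynomial} we have $\deg_x(Q_1)=\deg_{y_1'}(F_1)$, and comparing
\[
[\field(x)(y_1):\field(y_1)]=\deg_x(Q_1)=\deg_{y_1'}(F_1)=[\field(y_1,y_1'):\field(y_1)]
\]
shows that $E:=\field(x)(y_1(x))=\field(y_1(x),y_1'(x))$ is exactly the function field of $C$. Thus $\deg_{y_1}(Q_1)=[E:\field(x)]$ is the degree of $x$ as a rational function on $C$, i.e.\ the number of poles of $x$ counted with multiplicity. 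From $\mathrm{d}x=\mathrm{d}y_1/y_1'$, and since $\mathrm{d}x$ is exact and hence residue-free, $x$ has a pole precisely at the places $P$ of $C$ with $\ord_P(\mathrm{d}y_1)-\ord_P(y_1')\le-2$, where its pole order is $\nu_P=\ord_P(y_1')-\ord_P(\mathrm{d}y_1)-1$. The crucial local estimate is $\nu_P\le\alpha_P+\beta_P$, where $\alpha_P$ is the order of vanishing of $y_1'$ at $P$ and $\beta_P$ is the pole order of $y_1$ at $P$ (each taken to be zero when the corresponding condition fails); this is checked by distinguishing $y_1(P)$ finite from $y_1(P)=\infty$.

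Summing the local estimate over all places of $C$ then gives
\[
\deg_{y_1}(Q_1)=\sum_{P}\nu_P\ \le\ \sum_P(\alpha_P+\beta_P)=\deg_{y_1}(F_1)+\deg_{y_1'}(F_1),
\]
where the two summands are, respectively, the number of zeros of $y_1'$ and the number of poles of $y_1$ on $C$, equal to $\deg(y_1'\colon C\to\mathbb{P}^1)=\deg_{y_1}(F_1)$ and $\deg(y_1\colon C\to\mathbb{P}^1)=\deg_{y_1'}(F_1)$. Combining this with the tower estimate yields $[L_n:\field(x)]\le(\deg_{y_1}(G_1)+\deg_{y_1'}(G_1))\prod_{s=2}^n\deg_{y_s}(G_s)$, and since $\deg_{y_s}(Q_s)\le[L_n:\field(x)]$ for every $s$, the bound~\eqref{eq-degBound} follows. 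The delicate point in the global sum is the bookkeeping at places where $y_1$ has a pole and $y_1'$ simultaneously has a zero, which feed into both $\alpha_P$ and $\beta_P$; treating all places uniformly through the single inequality $\nu_P\le\alpha_P+\beta_P$ is what makes the sum telescope to $\deg_{y_1}(F_1)+\deg_{y_1'}(F_1)$.
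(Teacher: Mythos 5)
Your proof is correct, and its overall skeleton matches the paper's: reduce to $t=1$ via Remark~\ref{rem-solutionDecomposition}, obtain the $Q_s$ from algebraicity of the components, and then bound all degrees through the tower $\field(x)=L_0\subseteq L_1\subseteq\cdots\subseteq L_n$. For $s\ge 2$ your justification that $y_1'(x)=-\partial_xQ_1/\partial_{y_1}Q_1$ lies in $L_1$ and that simplicity keeps the initial of $G_s$ from vanishing on the solution is exactly what the paper compresses into ``applying the multiplicative formula for the degree to the tower of field extensions.'' The genuine divergence is at the crux: the paper obtains $\deg_{y_1}(Q_1)\le\deg_{y_1}(G_1)+\deg_{y_1'}(G_1)$ simply by citing \cite[Theorem~6]{cano2020algebraic}, whereas you reprove it from scratch, identifying $\field(x)(y_1(x))$ with the function field of the curve $F_1(y_1,y_1')=0$ via the equality $\deg_x(Q_1)=\deg_{y_1'}(F_1)$ from \cite{aroca2005algebraic} (which the paper itself invokes in the proof of Lemma~\ref{lemma:AllMinimalPolynomial}), and then counting the poles of $x$ using $\mathrm{d}x=\mathrm{d}y_1/y_1'$, exactness of $\mathrm{d}x$ to rule out simple poles, and the local estimate $\nu_P\le\alpha_P+\beta_P$. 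I checked both local cases (for $y_1(P)$ finite with contact order $m$ one gets $\nu_P=\ord_P(y_1')-m\le\alpha_P$; for a pole of order $b$ one gets $\nu_P=b+\ord_P(y_1')\le\alpha_P+\beta_P$) and the global identifications $\sum_P\alpha_P=\deg(y_1'\colon C\to\mathbb{P}^1)=\deg_{y_1}(F_1)$ and $\sum_P\beta_P=\deg(y_1\colon C\to\mathbb{P}^1)=\deg_{y_1'}(F_1)$, as well as the degenerate cases (constant $y_1(x)$; $F_1$ independent of $y_1$, where $y_1'$ is a nonzero constant on $C$ and has no zeros) and the harmless passage from the irreducible factor $F_1$ back to $G_1$: all sound. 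What the citation buys the paper is brevity and a bound traceable to the first-order theory it builds on; what your argument buys is a self-contained proof of that first-order degree bound, in the same places-and-divisors spirit as the cited source, with the simultaneous pole-of-$y_1$/zero-of-$y_1'$ bookkeeping handled cleanly by the single inequality $\nu_P\le\alpha_P+\beta_P$. One shared tacit assumption, in your write-up as in the paper's: every $G_s$ is nonzero with $\lv(G_s)=y_s$ (respectively $y_1'$), since for a zero polynomial $G_s$ the component $y_s$ is a free variable, the right-hand side of~\eqref{eq-degBound} is not even defined, and a free algebraic component can have arbitrarily large degree; such components must be excluded or normalized as in Remark~\ref{rem-solutionDecomposition}.
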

\begin{proof}
Without loss of generality we can assume that $t=1$ in~$\sys$ (cf. Remark~\ref{rem-solutionDecomposition}). 
By the primitive element theorem, there exist minimal polynomials $Q_1 \in \field[x,y_1]$, \ldots, $Q_n \in \field[x,y_n]$ of $y_1(x), \ldots, y_n(x)$, respectively.
Using~\cite[Theorem 6]{cano2020algebraic}, we have for $Q_1(x,y_1)$ the degree bound $$\deg_{y_1}(Q_1) \leq \deg_{y_1}(G_{1})+\deg_{y_1'}(G_{1}).$$
Applying the multiplicative formula for the degree to the tower of field extensions, the statement follows.
\end{proof}

Let us note that, in contrast to the situation for simple subsystems of the type~\eqref{type:thomas1alg}, not every combination of roots of the minimal polynomial system indeed defines a solution as it can be seen in Example~\ref{example}.

\section{Algorithm and Examples}\label{sec-Algorithms}
In~\cite[Algorithm 3.6]{bachler2012algorithmic} an algorithm to derive a Thomas decomposition of a given differential system is presented.
We refer to this algorithm by \textsf{ThomasDecomposition}.
In Section 4 of~\cite{aroca2005algebraic} there is a description of an algorithm that decides whether a given first order autonomous AODE $F(y,y')=0$ with an irreducible polynomial $F$ has algebraic solutions and compute them in the affirmative case.
Let us call this algorithm \textsf{AlgebraicSolve} and let the output be equal to the minimal polynomial of an algebraic solution, if it exists, or empty otherwise.

The next algorithm decides whether a system~$\sys$ as in~\eqref{prelim_EQ-AODESystem}, with algebraic dimension 1, has algebraic solutions and describes all of them in the affirmative case.

\begin{algorithm}[H]
\caption{SimpleSystemSolve} \label{ALG:SimpleSystem}
\begin{algorithmic}[1]
	\REQUIRE A system~$\sys$ of algebraic dimension one as in~\eqref{prelim_EQ-AODESystem}.
	\ENSURE A (finite) union of simple algebraic systems of the form~\eqref{type:thomas1alg} or~\eqref{type2:thomas1dim} with the same algebraic solutions as~$\sys$. 
	\STATE Apply \textsf{ThomasDecomposition} (with respect to the ordering~\eqref{eq:ranking}) to~$\sys$ and let $\{ \sys_k \}$ be the simple subsystems.
	\STATE Add simple systems of the type~\eqref{type2:thomas1dim} and~\eqref{type3:thomas1dim} to the output.
	\FOR{every system~$\sys_k$ of the form~\eqref{type1:thomas1dim}}
		\STATE Compute the constant solutions of the first $1 \le s < t$ many components, which are not free variables, and plug them into the remaining equations. 
		\STATE Compute a factorization of $G_t(y_t,y_t')$ and decompose~$\sys_k$ into subsystems where $G_t$ is replaced by the respective factor.
		\FOR{every such subsystem of $\sys_k$}
			\STATE Check by \textsf{AlgebraicSolve} whether $G_t=0$ has an algebraic solution.
			\STATE In the affirmative case, let $Q_t \in K[x,y_t]$ be the minimal polynomial of such an algebraic solution. 
			Apply \textsf{ThomasDecomposition} (with respect to the ordering~\eqref{eq:ranking}) to~$\sys_k \cup \{Q_t = 0\}$ in order to obtain the simple subsystems of~$\sys$ of type~\eqref{type:thomas1alg} and add them to the output.
		\ENDFOR
	\ENDFOR
\end{algorithmic}
\end{algorithm}

\begin{theorem}
Algorithm~\ref{ALG:SimpleSystem} is correct.
\end{theorem}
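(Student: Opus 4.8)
The plan is to establish correctness of Algorithm~\ref{ALG:SimpleSystem} by verifying two complementary inclusions: every output system consists only of algebraic solutions of~$\sys$, and conversely every algebraic solution of~$\sys$ is captured by some output system (possibly after the shift $c$ from Lemma~\ref{lem:allalgebraicsolutions}). First I would invoke Corollary~\ref{cor:ThomasDim1} to justify Step~1: since~$\sys$ has algebraic dimension one, the \textsf{ThomasDecomposition} applied with respect to the ranking~\eqref{eq:ranking} produces simple subsystems $\{\sys_k\}$, each of type~\eqref{type1:thomas1dim}, \eqref{type2:thomas1dim} or~\eqref{type3:thomas1dim}, whose solution sets partition $\Sol_{\field\Pu}(\sys)$ by~\eqref{eq:partitionsolutions}. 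Thus it suffices to analyze each $\sys_k$ separately and confirm that the algorithm correctly reproduces its algebraic solutions.

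Next I would handle the three types. For systems of type~\eqref{type2:thomas1dim} and~\eqref{type3:thomas1dim}, which are already algebraic (no proper derivatives occur), every solution is algebraic by the tower-of-extensions argument in Corollary~\ref{cor:simplesolutionsarealgebraic}, so Step~2 correctly adds them verbatim. For a system $\sys_k$ of type~\eqref{type1:thomas1dim}, Step~4 reduces to the case $t=1$ as justified in Remark~\ref{rem-solutionDecomposition}: one computes the constant values $\eta_1,\ldots,\eta_{t-1}$ for the lower components and substitutes, leaving the autonomous equation $G_t(\eta_1,\ldots,\eta_{t-1},y_t,y_t')=0$ as the equation with lowest-ranked leading variable $y_t'$. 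The factorization in Step~5 is legitimate because, by Theorem~\ref{THM:AllAlgebraic}, the property of having an algebraic solution can be tested irreducible-factor by irreducible-factor, and \textsf{AlgebraicSolve} applies precisely to an irreducible $F$.

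The crux is then the interface between \textsf{AlgebraicSolve} and the second \textsf{ThomasDecomposition} call in Step~8. For the forward inclusion I would argue as follows: when $Q_t\in K[x,y_t]$ is the minimal polynomial returned, Lemma~\ref{lem:algsol} (with $P_1=Q_t$) guarantees that the Thomas decomposition of $\sys_k\cup\{Q_t=0\}$ consists only of simple algebraic systems of type~\eqref{type:thomas1alg}, and Lemma~\ref{lem:allalgebraicsolutions} guarantees that all their solutions—and those of their shifts $\mathcal{H}_k(c)$—are genuine algebraic solutions of~$\sys_k$, hence of~$\sys$. For the converse inclusion, that no algebraic solution is missed, I would appeal directly to Theorem~\ref{thm:algSolDecomposition}: if $Y(x)$ is an algebraic solution with first non-constant component $y_t(x)$, then by Theorem~\ref{THM:AllMinimalPolynomial} there is a shift $c\in K$ with $Q_t(x+c,y_t(x))=0$ for the minimal polynomial $Q_t$ attached to the relevant subsystem, so $Y(x)$ appears among the solutions of $\mathcal{H}_k(-c)$. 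Purely constant solutions are retained directly through the constant-solution computation in Step~4 and the type~\eqref{type3:thomas1dim} systems of Step~2.

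The main obstacle I anticipate is the bookkeeping around the reduction to $t=1$ combined with the parametric and free variables described in Remark~\ref{rem-solutionDecomposition}: one must check carefully that substituting the constants $\eta_1,\ldots,\eta_{t-1}$ does not spoil simplicity of the remaining system (so that Lemma~\ref{lem:algsol} still applies) and that free variables, which may be chosen arbitrarily, are correctly reflected as zero-polynomial slots in the output of type~\eqref{type:thomas1alg}. A secondary but purely formal point is termination: each loop ranges over a finite set—finitely many subsystems $\sys_k$, finitely many irreducible factors of $G_t$, and a single minimal polynomial per factor from \textsf{AlgebraicSolve}—so the algorithm halts, and the finiteness of the shift family is handled abstractly by the parameter $c$ exactly as in Theorem~\ref{thm:algSolDecomposition}. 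I would keep the calculations implicit and lean on the cited lemmas, since the substantive content has already been proved.
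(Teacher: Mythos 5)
Your proposal is correct and follows essentially the same route as the paper's own (much terser) proof: partition the solutions via the Thomas decomposition into the three types of Proposition~\ref{prop:thomas1dim}, handle types~\eqref{type2:thomas1dim} and~\eqref{type3:thomas1dim} directly, and reduce type~\eqref{type1:thomas1dim} to Theorem~\ref{thm:algSolDecomposition} (which already packages the soundness via Lemmas~\ref{lem:algsol} and~\ref{lem:allalgebraicsolutions} and the completeness via Theorem~\ref{THM:AllMinimalPolynomial}), with termination inherited from the sub-algorithms. Your extra detail on the $t=1$ reduction, the factor-by-factor splitting, and the free/parametric variables merely makes explicit what the paper delegates to Remark~\ref{rem-solutionDecomposition} and the proof of Theorem~\ref{thm:algSolDecomposition}.
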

\begin{proof}
Using Proposition~\ref{prop:thomas1dim}, the formal Puiseux series solutions of~$\sys$ is the disjoint union of the solutions of the~$\sys_k$. 
Systems~$\sys_k$ of the type~\eqref{type2:thomas1dim} and~\eqref{type3:thomas1dim} lead to constant solution components, possibly depending on a parametric variable, and components which are free variables. 
Systems of the type~\eqref{type3:thomas1dim} are in particular of the form~\eqref{type:thomas1alg}.
The non-constant solutions are derived from~$\sys_k$ of type~\eqref{type1:thomas1dim}. 
If such a system has an algebraic solution vector, then the algebraic simple subsystems are computed and cover all algebraic solutions (see Theorem~\ref{thm:algSolDecomposition}).

Then termination follows from the termination of the sub-algorithms and the finite representation of their outputs.
\end{proof}

Let us illustrate the previous ideas and results in the following examples.

\begin{example}\label{ex-1-part3}
Let us consider the system of differential equations of algebraic dimension equal to one given by
\begin{equation}
\sys=
\left\{ \begin{array}{ll}
yy'y''+y'^3-yy''-y'^2=0 \\
z^3-2y'^2+yy'-1=0 \\
z^3+yy''-y'^2=0 \\
3z^2z'-4y'y''=0
\end{array} \right.
\end{equation}
The system~$\sys$ has a differential Thomas decomposition (with respect to the ordering $y<y'<z<z'$) into the single simple system
\begin{equation*}
\sys_1=
\left\{ \begin{array}{ll}
y^2z^3-2=0 \\
yy'-1=0 \\
y \ne 0
\end{array} \right.
\end{equation*}
Its ambient affine space is $\C[y,y',z]$ and the leading variables of the equations are $z$ and $y'$, respectively. 
Hence, the algebraic dimension of~$\sys_1$ is again one.
The equation $yy'-1=0$ has an algebraic solution with minimal polynomial $Q_1(x,y)=y^2-2x.$
This leads to the algebraic solutions given by the simple system
$$\{ Q_1(x,y)=y^2-2x, \ Q_2(x,z)=xz^3-1 \},$$
which are already the minimal polynomials of the solution components.
\end{example}

\begin{example}\label{example}
Let us consider the system
\begin{align*}
\sys= \left\{ \begin{array}{ll}
G_1 = 8y'^3-27y=0 \\
G_2 = z^5-y^3=0 \\
G_3 = 5z^4z'-3y^2y'=0
\end{array}\right.
\end{align*}
It is already a simple system with ambient affine space $\C[y,y',z,z']$ and the equations have leading variables $y',z$ and $z'$, respectively. 
Hence, $\sys$ is indeed of algebraic dimension one.

By direct computation, the solutions of $G_1(y,y')=0$ are $y_1(x)=x^{3/2}$, $y_2(x)=-x^{3/2}$, implicitly defined by $Q_1(x,y)=y^2-x^3.$
From $G_2(y,z)=0$ we see that $z_1(x)=\zeta\,x^{9/10}$, $z_2(x)=-\zeta\,x^{9/10}$ with $\zeta^{5}=1$. 
By plugging them into $G_3=0$, we obtain that $(y_1(x),z_1(x))$, $(y_2(x),z_2(x))$ are solutions of~$\sys$, but neither $(y_1(x),z_2(x))$ nor $(y_2(x),z_1(x))$.

Now let us use Algorithm~\ref{ALG:SimpleSystem}. 
A Thomas decomposition of~$\sys$ (with respect to the ordering $y<y'<z<z'$) is
$$\sys_1=\{y^3-z^5=0, \ 8y'^3-27y=0, \ y \ne 0 \}, \ \sys_2= \{ y=0, \ z=0 \}.$$  
The algebraic dimension of~$\sys_1$ is one and that of~$\sys_2$ is zero.
The equation $8y'^3-27y=0$ has the algebraic solutions given by $Q_1$. 
A Thomas decomposition of $\sys_1 \cup \{Q_1 = 0\}$ is $$\{Q_1(x,y)=y^2-x^3, \ H_2(x,y,z)=z^5-x^3\,y \}.$$
The corresponding minimal polynomial system is $$\{Q_1(x,y)=y^2-x^3, \ Q_2(x,z)=z^{10}-x^{9} \}.$$
In the latter system, not all combinations of roots are indeed solutions of~$\sys_1$ or~$\sys$.
\end{example}

\appendix

\bibliographystyle{acm}

\end{document}